\newtheorem{theorem}{Theorem}[section]
\newtheorem{lemma}[theorem]{{\bf Lemma}}
\newtheorem{cor}[theorem]{{\bf Corollary}}
\newtheorem{rem}[theorem]{{\bf Remark}}
\newtheorem{ex}[theorem]{{\bf Example}}
\newtheorem{definition}{Definition}[section]
\numberwithin{equation}{section}
\newenvironment{proof}{\indent{\em Proof:}}{\quad \hfill
$\Box$\vspace*{2ex}}
\font\Bbb=msbm10 at 12pt
\newcommand{\N}{\mbox{\Bbb N}}
\begin{document}
\setcounter{page}{1}
\begin{center}
\vspace{0.4cm} {\large{\bf Global Existence and Ulam--Hyers Stability  of
$\Psi$--Hilfer  Fractional Differential Equations}}\\\vspace{0.5cm}
Kishor D. Kucche $^{1}$ \\
kdkucche@gmail.com \\
\vspace{0.35cm}
Jyoti P. Kharade  $^{2}$\\
jyoti.thorwe@gmail.com\\
\vspace{0.35cm}
$^{1,2}$ Department of Mathematics, Shivaji University, Kolhapur-416 004, Maharashtra, India.
\end{center}
\def\baselinestretch{1.0}\small\normalsize
\begin{abstract}
In this paper, we consider  the Cauchy-type problem for a nonlinear differential equation involving $\Psi$-Hilfer fractional derivative and prove the existence and uniqueness of solutions in the  weighted space of functions. The Ulam--Hyers and Ulam--Hyers--Rassias stability of Cauchy--type problem is investigated via successive approximation method. Further, we investigate the dependence of solutions on the initial conditions  and uniqueness  via $\epsilon$-approximated solution. An example is provided to illustrate  the results we obtained.
\end{abstract}
\noindent\textbf{Key words:} $\Psi$--Hilfer fractional derivative; Existence and uniqueness;  Ulam--Hyers stability; Ulam--Hyers--Rassias stability; Successive approximation method; $\epsilon-$approximate solution; Dependence of solution.
 \\
\noindent
\textbf{2010 Mathematics Subject Classification:} 26A33, 34A12, 34K20
\def\baselinestretch{1.5}
\allowdisplaybreaks
\section{Introduction}
The theory and applications of fractional differential equations (FDEs) \cite{Kilbas} is the field of great interest  in pure mathematics and in applications. Many applications of FDEs in various branches of science and engineering intended researchers to work on different kinds of problems  related to  this field. There are many different definitions of fractional derivative and fractional integral which do not coincide in general.
Hilfer \cite{Hilfer} introduced the generalized Riemann--Liouville fractional derivative of order $\mu  ~(n-1<\mu<n \in \mathbb{N})$  and of type $\nu ~(0\leq \nu \leq 1)$, defined by
\begin{equation*}
\mathbf{D}_{a+}^{\mu ,\nu }y\left( t\right) =\mathbf{I}_{a+}^{\nu \left(
n-\mu \right)}\left(\frac{d}{dx}\right) ^{n}\mathbf{I}_{a+}^{\left( 1-\nu \right) \left( n-\mu
\right) }y\left( t\right),
\end{equation*}
which allows to interpolate between the  Riemann-Liouville derivative $\mathbf{D}_{a+}^{\mu ,0 }=~ ^{RL}\mathbf{D}_{a+}^{\mu }$ and the Caputo derivative  $\mathbf{D}_{a+}^{\mu ,1 }= ~^{C}\mathbf{D}_{a+}^{\mu }$. Furati et al. \cite{Furati,Furati1} have dealt with the basic problems of existence, uniqueness and stability  of nonlinear Cauchy type problem involving  Hilfer fractional derivative.

In very recent contribution, Sousa and Olivera \cite{Sousa1} extended the concept of Hilfer derivative operator and introduced a new definition of fractional derivative namely $\Psi$--Hilfer fractional derivative of a function of order $\mu$ and of type $\nu$ with respect to another function $\Psi$, discussed its calculus and derived a class of fractional integrals and fractional derivatives by giving a particular value to the function $\Psi$.  In \cite{Sousa} Sousa and Olivera proved generalized Gronwall inequality involving fractional integral with respect to the another function and investigated basic results pertaining to existence, uniqueness  and data dependence of Cauchy type problem involving $\Psi$-Hilfer differential operator.

The fundamental problem of Ulam \cite{Ulam} was generalized for the stability of FDEs \cite{Wang}. Stability of any FDE in Ulam--Hyers sense is the problem of  dealing with the replacement of given FDE by a fractional differential inequality and to obtain the sufficient conditions about ``When the solutions of the fractional differential inequalities are close to the solutions of given FDE ?". For Ulam--Hyers  stability theory of fractional differential equations  and its recent development, one can refer to \cite{Wang,Abbas,Abbas1,Sousa2,Benchohra} and the references therein. 

Huang et al. \cite{Huang} investigated HU stability of integer order delay differential equations by method of successive approximation. Kucche and Sutar \cite{Kucche} have  extended the idea of \cite{Huang}  and investigated the HU stability of nonlinear delay FDEs with Caputo derivative.

Motivated by the work of \cite{Sousa1,Sousa,Kucche}, in this paper, we consider the $\Psi$--Hilfer fractional differential equation ($\Psi$--Hilfer FDE) of the form:
\begin{align}
^H \mathbf{D}^{\mu,\nu;\,\Psi}_{a^+}y(t)&=f(t, y(t)),~t \in [a,b], ~0<\mu<1, ~0\leq\nu\leq 1,\label{e11}\\
\mathbf{I}_{a^+}^{1-\rho;\, \Psi}y(a)&=y_a \in \mathbb{R}, ~\rho=\mu+\nu-\mu\nu, \label{e12}
\end{align}
where $^H \mathbf{D}^{\mu,\nu;\,\Psi}_{a^+}(\cdot)$ is the (left-sided) $\Psi$-Hilfer fractional derivative of order $\mu$ and type $\nu$, $\mathbf{I}_{a^+}^{1-\rho;\, \Psi}$ is (left-sided) fractional integral of order $1-\rho$ with respect to another function $\Psi$ in Riemann--Liouville sense and $f: [a,b] \times \mathbb{R}  \to \mathbb{R} $ is a given function that will be specified latter.

The main objective of this paper is to prove the global existence and uniqueness of solution 
to  $\Psi$--Hilfer fractional differential equations \eqref{e11}-\eqref{e12}. Using method of successive approximations we investigate the Ulam--Hyers (HU) and Ulam--Hyers--Rassias (HUR) stability of  $\Psi$--Hilfer FDE \eqref{e11}. By utilizing generalized Gronwall inequality \cite{Sousa1} we obtain an estimations for the difference of two $\epsilon$-approximated solutions of $\Psi$--Hilfer fractional differential equations \eqref{e11}-\eqref{e12}, from which we can derive the results pertaining to uniqueness and dependence of solutions on the initial conditions. 

The paper is organized as follows.In section 2, some basic definitions and results concerning $\Psi$--Hilfer fractional derivative have been provided which are important for the development of the paper. Section 3 dealt with existence and uniqueness of solutions of the problem \eqref{e11} - \eqref{e12}. Section 4 dealt with HU stability of  $\Psi$--Hilfer FDE \eqref{e11} via successive approximations. In section 5, we study $\epsilon-$approximate solution of the $\Psi$--Hilfer FDE \eqref{e11}.In section 6, we provide an illustrative example.
\section{Preliminaries} \label{preliminaries}
In this section, we recall few  definitions, notions and the fundamental results about fractional integrals of a function with respect to another function \cite{Kilbas} and $\Psi$--Hilfer fractional operator \cite{Sousa1,Sousa}.

Let $0<a<b<\infty$, ~$\Delta=[a,b] \subset \mathbb{R}_{+}= [0,\infty), 0\leq \rho <1$ and $\Psi\in C^{1}(\Delta,\mathbb{R})$ be an increasing function such that $\Psi'(x)\neq 0$, $\forall~ x\in \Delta$. The weighted spaces $\mathbf{C}_{1-\rho;\,\Psi}(\Delta,\mathbb{R})$, ~$\mathbf{C}_{1-\rho;\,\Psi}^\rho (\Delta,\mathbb{R})$ and ~$\mathbf{C}_{1-\rho;\,\Psi}^{\mu,\nu}(\Delta,\mathbb{R})$ of functions are defined as follows:
\begin{itemize}
\item[(i)] $\mathbf{C}_{1-\rho;\, \Psi}(\Delta,\mathbb{R})=\left\{h:(a,b]\to\mathbb{R} : ~(\Psi(t)-\Psi(a))^{1-\rho}h(t)\in \mathbf{C}(\Delta,\mathbb{R})\right\}$, with the norm
$$||h||_{\mathbf{C}_{1-\rho;\, \Psi}}=\max_{t\in \Delta}\left|(\Psi(t)-\Psi(a))^{1-\rho}\,h(t)\right|,$$
\item[(ii)] $\mathbf{C}_{1-\rho;\, \Psi}^\rho (\Delta,\mathbb{R})=\left\{h\in \mathbf{C}_{1-\rho;\, \Psi}(\Delta,\mathbb{R}): \mathbf{D}_{a^+}^\rho h(t)\in \mathbf{C}_{1-\rho;\, \Psi}(\Delta,\mathbb{R}) \right\},$
\item[(iii)] $\mathbf{C}_{1-\rho;\, \Psi}^{\mu,\nu}(\Delta,\mathbb{R})=\left\{h\in \mathbf{C}_{1-\rho;\, \Psi}(\Delta,\mathbb{R}): ~ ^H\mathbf{D}_{a^+}^{\mu,\nu} h(t)\in \mathbf{C}_{1-\rho;\, \Psi}(\Delta,\mathbb{R}) \right\}.$
\end{itemize}
\begin{definition} [\cite{{Kilbas}},\cite{Samko}]
The $\Psi$--Riemann fractional integral of order $\mu > 0$ of the function h is given by 
\begin{equation*}\label{21}
\mathbf{I}_{a+}^{\mu ;\,\Psi }h\left( t\right) :=\frac{1}{\Gamma \left( \mu
\right) }\int_{a}^{t}\mathcal{L}_{\Psi}^{\mu}(t,\eta ) 
h\left( \eta\right) \, d\eta,
\end{equation*}
where
$$\mathcal{L}_{\Psi}^{\mu}(t,\eta )=\Psi ^{\prime }\left(\eta \right) \left( \Psi \left(
t\right) -\Psi \left( \eta \right) \right) ^{\mu-1}$$
\end{definition}
\begin{lemma}[\cite{Kilbas}]
Let $\mu>0$, $\nu>0$ and $\delta >0$. Then: 
\begin{itemize}
\item[(i)] $\mathbf{I}_{a^+}^{\mu;\, \Psi}\mathbf{I}_{a^+}^{\nu ;\, \Psi} h(t)=\mathbf{I}_{a^+}^{\mu+\nu;\, \Psi} h(t)$
\item[(ii)] If $h(t)= (\Psi(t)-\Psi(a))^{\delta-1},$
 then   ~ $\mathbf{I}_{a^+}^{\mu;\, \Psi}h(t)=\frac{\Gamma(\delta)}{\Gamma(\mu+\delta)}(\Psi(t)-\Psi(a))^{\mu + \delta-1}.$
\end{itemize}
\end{lemma}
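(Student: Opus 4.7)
The plan is to reduce each integral to a Beta function via a $\Psi$-adapted change of variable; the hypothesis that $\Psi\in C^{1}$ is strictly increasing with $\Psi'\neq 0$ makes $\eta\mapsto \Psi(\eta)$ a legitimate substitution throughout, and the factor $\Psi'(\eta)$ appearing in the definition of $\mathbf{I}_{a^+}^{\mu;\,\Psi}$ is exactly the Jacobian that will let the classical Riemann--Liouville identities transport to this setting.

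For part (i), I would first write out $\mathbf{I}_{a^+}^{\mu;\,\Psi}\mathbf{I}_{a^+}^{\nu;\,\Psi}h(t)$ by expanding the definition twice as a double integral over the triangle $\{(\tau,\eta):a\le\tau\le\eta\le t\}$. Applying Fubini to exchange the order of integration isolates an inner integral
$$\int_{\tau}^{t}\Psi'(\eta)\,(\Psi(t)-\Psi(\eta))^{\mu-1}(\Psi(\eta)-\Psi(\tau))^{\nu-1}\,d\eta.$$
The decisive step is the substitution $u=(\Psi(\eta)-\Psi(\tau))/(\Psi(t)-\Psi(\tau))$: its Jacobian $du=\Psi'(\eta)\,d\eta/(\Psi(t)-\Psi(\tau))$ cleanly absorbs the $\Psi'(\eta)$ factor, and the two remaining differences scale out as a power of $\Psi(t)-\Psi(\tau)$. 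What is left is $(\Psi(t)-\Psi(\tau))^{\mu+\nu-1}B(\nu,\mu)$; replacing $B(\nu,\mu)$ by $\Gamma(\mu)\Gamma(\nu)/\Gamma(\mu+\nu)$ turns the remaining $\tau$-integral into precisely $\mathbf{I}_{a^+}^{\mu+\nu;\,\Psi}h(t)$.

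For part (ii), I would insert $h(\eta)=(\Psi(\eta)-\Psi(a))^{\delta-1}$ into the defining integral and run the same substitution $u=(\Psi(\eta)-\Psi(a))/(\Psi(t)-\Psi(a))$, which reduces the integral to $(\Psi(t)-\Psi(a))^{\mu+\delta-1}B(\delta,\mu)/\Gamma(\mu)$, and using $B(\delta,\mu)=\Gamma(\delta)\Gamma(\mu)/\Gamma(\mu+\delta)$ yields the stated formula at once. The only nontrivial point, and hence the main obstacle, lies in (i) and is really a bookkeeping matter: justifying Fubini, which is routine once the integrand is dominated by its absolute value and $h$ is taken locally integrable on $(a,b]$ (automatic on the weighted spaces introduced above), and then checking that the $\Psi'(\eta)$ factor is exactly consumed by the change-of-variable Jacobian. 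That cancellation is the structural reason all the classical Riemann--Liouville identities reappear with $t-a$ simply replaced by $\Psi(t)-\Psi(a)$.
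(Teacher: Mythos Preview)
Your proposal is correct and is the standard textbook argument. Note, however, that the paper does not actually prove this lemma: it is stated in the preliminaries and attributed to \cite{Kilbas}, so there is no in-paper proof to compare against. Your Beta-function reduction via the $\Psi$-adapted substitution is precisely the argument one finds in Kilbas--Srivastava--Trujillo (and in Samko et al.\ for the classical case $\Psi(t)=t$), so you have reconstructed the cited source's approach.
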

We need following results  \cite{Kilbas,Samko} which are useful in the subsequent analysis of the paper.
\begin{lemma}[\cite{{Sousa}}] \label{lem22} 
If ${\mu}>0$ and $0\leq \rho <1$, then $\mathbf{I}_{a^+}^{\mu;\, \Psi}$ is bounded from  $\mathbf{C}_{\rho;\, \Psi}(\Delta,\mathbb{R})$ to $\mathbf{C}_{\rho;\, \Psi}(\Delta,\mathbb{R}).$ Also, if $\rho\leq \mu$, then $\mathbf{I}_{a^+}^{\mu;\, \Psi}$ is bounded from $\mathbf{C}_{\rho;\, \Psi}(\Delta,\mathbb{R})$ to $C(\Delta,\mathbb{R}).$
\end{lemma}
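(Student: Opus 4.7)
The plan is to prove both boundedness statements by a direct estimate on $\mathbf{I}_{a^+}^{\mu;\,\Psi}h(t)$ using the pointwise bound $|h(\eta)| \le \|h\|_{\mathbf{C}_{\rho;\,\Psi}}\,(\Psi(\eta)-\Psi(a))^{-\rho}$ valid for any $h\in\mathbf{C}_{\rho;\,\Psi}(\Delta,\mathbb{R})$, and then reducing the resulting integral to a Beta function via a substitution.

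First I would fix $h\in\mathbf{C}_{\rho;\,\Psi}(\Delta,\mathbb{R})$, let $M=\|h\|_{\mathbf{C}_{\rho;\,\Psi}}$, and insert the weight bound into the definition of $\mathbf{I}_{a^+}^{\mu;\,\Psi}h(t)$ to get
\begin{equation*}
\bigl|\mathbf{I}_{a^+}^{\mu;\,\Psi}h(t)\bigr| \le \frac{M}{\Gamma(\mu)}\int_a^t \Psi'(\eta)\bigl(\Psi(t)-\Psi(\eta)\bigr)^{\mu-1}\bigl(\Psi(\eta)-\Psi(a)\bigr)^{-\rho}\,d\eta.
\end{equation*}
Then, using the change of variable $u=(\Psi(\eta)-\Psi(a))/(\Psi(t)-\Psi(a))$, which turns the integral into a Beta integral, I would reach the key estimate
\begin{equation*}
\bigl|\mathbf{I}_{a^+}^{\mu;\,\Psi}h(t)\bigr| \le M\,\frac{\Gamma(1-\rho)}{\Gamma(\mu+1-\rho)}\,\bigl(\Psi(t)-\Psi(a)\bigr)^{\mu-\rho}.
\end{equation*}
This single inequality is the engine for both claims.

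For the first assertion I would multiply by $(\Psi(t)-\Psi(a))^{\rho}$ to obtain
\begin{equation*}
\bigl(\Psi(t)-\Psi(a)\bigr)^{\rho}\bigl|\mathbf{I}_{a^+}^{\mu;\,\Psi}h(t)\bigr| \le \frac{\Gamma(1-\rho)}{\Gamma(\mu+1-\rho)}\bigl(\Psi(b)-\Psi(a)\bigr)^{\mu}\,\|h\|_{\mathbf{C}_{\rho;\,\Psi}},
\end{equation*}
which gives the desired norm bound once continuity of $(\Psi(t)-\Psi(a))^{\rho}\mathbf{I}_{a^+}^{\mu;\,\Psi}h(t)$ on $\Delta$ is verified; continuity on $(a,b]$ follows by dominated convergence applied to the integrand (the singularity at $\eta=a$ is integrable since $\rho<1$, and the singularity at $\eta=t$ is integrable since $\mu>0$), while the estimate above forces the value at $t=a$ to be $0$, yielding continuity at the left endpoint. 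For the second assertion, the hypothesis $\rho\le\mu$ makes $(\Psi(t)-\Psi(a))^{\mu-\rho}$ continuous and bounded on $\Delta$, so the same estimate shows $\mathbf{I}_{a^+}^{\mu;\,\Psi}h\in C(\Delta,\mathbb{R})$ with the displayed operator norm.

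The main obstacle I anticipate is not the algebra of the Beta-function reduction, which is routine, but verifying the continuity of $(\Psi(t)-\Psi(a))^{\rho}\mathbf{I}_{a^+}^{\mu;\,\Psi}h(t)$ carefully at $t=a$ and at interior points where the kernel is singular; the $\Psi$-dependence requires using monotonicity and $C^1$-regularity of $\Psi$ (and $\Psi'\neq 0$) so that the change of variable is legitimate and the dominated convergence argument applies uniformly. Once this is dispatched, the quantitative bound follows immediately from the Beta integral.
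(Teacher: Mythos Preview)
The paper does not give its own proof of this lemma: it is stated with a citation to \cite{Sousa} and used without argument. So there is nothing in the paper to compare your proposal against line by line.

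That said, your approach is the standard and correct one, and is essentially how the result is proved in the cited reference: bound $|h(\eta)|$ by $\|h\|_{\mathbf{C}_{\rho;\Psi}}(\Psi(\eta)-\Psi(a))^{-\rho}$, recognize the resulting integral as $\mathbf{I}_{a^+}^{\mu;\Psi}$ applied to a power of $(\Psi(\cdot)-\Psi(a))$, and evaluate it via the Beta integral (this is exactly Lemma~2.1(ii) of the paper with $\delta=1-\rho$). The key pointwise estimate you obtain,
\[
\bigl|\mathbf{I}_{a^+}^{\mu;\Psi}h(t)\bigr|\le \|h\|_{\mathbf{C}_{\rho;\Psi}}\,\frac{\Gamma(1-\rho)}{\Gamma(\mu+1-\rho)}\,(\Psi(t)-\Psi(a))^{\mu-\rho},
\]
is correct and immediately yields both operator-norm bounds.

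One small point deserves more care than you give it. In the second assertion you write that the bound alone shows $\mathbf{I}_{a^+}^{\mu;\Psi}h\in C(\Delta,\mathbb{R})$. When $\mu>\rho$ the bound forces the limit at $t=a$ to be $0$ and you are fine. When $\mu=\rho$, however, the right-hand side is a nonzero constant and the bound by itself does not give continuity at $a$; indeed for $h(t)=(\Psi(t)-\Psi(a))^{-\rho}$ one has $\mathbf{I}_{a^+}^{\mu;\Psi}h(t)\equiv\Gamma(1-\rho)\neq 0$. The clean way to handle this endpoint is to write $g(t)=(\Psi(t)-\Psi(a))^{\rho}h(t)\in C(\Delta,\mathbb{R})$, perform your substitution $u=(\Psi(\eta)-\Psi(a))/(\Psi(t)-\Psi(a))$ to obtain
\[
\mathbf{I}_{a^+}^{\mu;\Psi}h(t)=\frac{(\Psi(t)-\Psi(a))^{\mu-\rho}}{\Gamma(\mu)}\int_0^1 (1-u)^{\mu-1}u^{-\rho}\,g\bigl(\Psi^{-1}(\Psi(a)+u(\Psi(t)-\Psi(a)))\bigr)\,du,
\]
and then apply dominated convergence as $t\to a^+$; the limit is $g(a)\,\Gamma(1-\rho)$, so $\mathbf{I}_{a^+}^{\mu;\Psi}h$ extends continuously to $t=a$. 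With this adjustment your argument is complete.
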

\begin{definition} [\cite{{Sousa1}}]
The $\Psi$--Hilfer fractional derivative of a function $h$ of order $0<\mu<1$ and type $0\leq \nu \leq 1$, is defined by
$$^H \mathbf{D}^{\mu, \, \nu; \, \Psi}_{a^+}h(t)= \mathbf{I}_{a^+}^{\nu ({1-\mu});\, \Psi} \left(\frac{1}{{\Psi}^{'}(t)}\frac{d}{dt}\right)^{'}\mathbf{I}_{a^+}^{(1-\nu)(1-\mu);\, \Psi} h(t).$$
\end{definition}
\begin{lemma}[\cite{Sousa1}] \label{lem2.3}
If $h\in C^{1}(\Delta,\mathbb{R}),$ $0<\mu<1$ and $0\leq\nu \leq 1 $, then
\begin{enumerate}
\item[(i)] $\mathbf{I}_{a^+}^{\mu;\, \Psi}\, {^H \mathbf{D}^{\mu, \, \nu; \, \Psi}_{a^+}}h(t)= h(t)- \Omega_{\Psi}^{\rho}(t,a)\mathbf{I}_{a^+}^{(1-\nu)(1-\mu);\Psi}h(a),$
where ~$\Omega_{\Psi}^{\rho}(t,a)=\frac{(\Psi(t)-\Psi(a))^{\rho-1}}{\Gamma(\rho)}
$
\item[(ii)] ${^H\mathbf{D}^{\mu, \, \nu; \, \Psi}_{a^+}}\,\mathbf{I}_{a^+}^{\mu;\, \Psi}h(t)=h(t).$
\end{enumerate}
 \end{lemma}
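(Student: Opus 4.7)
The plan is to reduce both identities to known properties of the $\Psi$-Riemann--Liouville fractional integral, the operator $D_\Psi:=\frac{1}{\Psi'(t)}\frac{d}{dt}$, and the semigroup law from the preceding lemma, together with the prototype identity on $(\Psi(t)-\Psi(a))^{\delta-1}$. A useful bookkeeping observation throughout is the identity $\rho=\mu+\nu(1-\mu)=1-(1-\nu)(1-\mu)$, so that, writing $\gamma:=(1-\nu)(1-\mu)$ and $\beta:=\nu(1-\mu)$, one has $\mu+\beta=\rho$ and $\gamma+\mu=1-\beta$.

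For (i), I would substitute the definition of ${}^H\mathbf{D}^{\mu,\nu;\Psi}_{a^+}$ and apply $\mathbf{I}^{\mu;\Psi}_{a^+}$:
\[
\mathbf{I}^{\mu;\Psi}_{a^+}\,{}^H\mathbf{D}^{\mu,\nu;\Psi}_{a^+}h(t)
=\mathbf{I}^{\mu;\Psi}_{a^+}\mathbf{I}^{\beta;\Psi}_{a^+}D_\Psi\,\mathbf{I}^{\gamma;\Psi}_{a^+}h(t)
=\mathbf{I}^{\rho;\Psi}_{a^+}D_\Psi\,\mathbf{I}^{\gamma;\Psi}_{a^+}h(t),
\]
using semigroup. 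Since $\rho=1-\gamma$, the combination $\mathbf{I}^{1-\gamma;\Psi}_{a^+}D_\Psi$ acting on $G(t):=\mathbf{I}^{\gamma;\Psi}_{a^+}h(t)$ is exactly the $\Psi$-Caputo derivative of $G$ of order $\gamma$. I would then invoke the standard Caputo--Riemann--Liouville relation $\mathbf{I}^{1-\gamma;\Psi}_{a^+}D_\Psi G=\mathbf{D}^{\gamma;\Psi}_{a^+}G-G(a)\,\mathbf{D}^{\gamma;\Psi}_{a^+}[1]$, prove $\mathbf{D}^{\gamma;\Psi}_{a^+}G=\mathbf{D}^{\gamma;\Psi}_{a^+}\mathbf{I}^{\gamma;\Psi}_{a^+}h=h(t)$ from the left-inverse property (itself a consequence of the semigroup law), and compute $\mathbf{D}^{\gamma;\Psi}_{a^+}[1]=\tfrac{(\Psi(t)-\Psi(a))^{-\gamma}}{\Gamma(1-\gamma)}$ using Lemma part~(ii) on $(\Psi(t)-\Psi(a))^{\delta-1}$ with $\delta=1$. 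Because $\rho-1=-\gamma$ and $\Gamma(\rho)=\Gamma(1-\gamma)$, this quantity equals $\Omega_{\Psi}^{\rho}(t,a)$, and $G(a)=\mathbf{I}^{(1-\nu)(1-\mu);\Psi}_{a^+}h(a)$, which assembles to the asserted formula.

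For (ii), the strategy is dual. Substituting the definition and using semigroup on the $\mathbf{I}^{\gamma;\Psi}_{a^+}\mathbf{I}^{\mu;\Psi}_{a^+}$ block yields
\[
{}^H\mathbf{D}^{\mu,\nu;\Psi}_{a^+}\mathbf{I}^{\mu;\Psi}_{a^+}h(t)
=\mathbf{I}^{\beta;\Psi}_{a^+}D_\Psi\,\mathbf{I}^{\gamma+\mu;\Psi}_{a^+}h(t)
=\mathbf{I}^{\beta;\Psi}_{a^+}D_\Psi\,\mathbf{I}^{1-\beta;\Psi}_{a^+}h(t),
\]
and the inner operator $D_\Psi\,\mathbf{I}^{1-\beta;\Psi}_{a^+}$ is by definition the $\Psi$-Riemann--Liouville derivative $\mathbf{D}^{\beta;\Psi}_{a^+}$. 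So the right-hand side is $\mathbf{I}^{\beta;\Psi}_{a^+}\mathbf{D}^{\beta;\Psi}_{a^+}h(t)$. For this I would invoke the classical identity $\mathbf{I}^{\beta;\Psi}_{a^+}\mathbf{D}^{\beta;\Psi}_{a^+}h(t)=h(t)-\tfrac{(\Psi(t)-\Psi(a))^{\beta-1}}{\Gamma(\beta)}\mathbf{I}^{1-\beta;\Psi}_{a^+}h(a)$, observing that since $h\in C^1(\Delta,\mathbb{R})$ is continuous at $a$ and $1-\beta>0$, the trace $\mathbf{I}^{1-\beta;\Psi}_{a^+}h(a)$ vanishes, leaving $h(t)$.

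The principal technical hurdle is justifying the two auxiliary facts I invoked: the Caputo--Riemann--Liouville bridge $\mathbf{I}^{1-\gamma;\Psi}_{a^+}D_\Psi=\mathbf{D}^{\gamma;\Psi}_{a^+}[\,\cdot-G(a)]$ and the relation $\mathbf{I}^{\beta;\Psi}_{a^+}\mathbf{D}^{\beta;\Psi}_{a^+}h=h-\Omega$-type boundary term. Both can be established cleanly by writing $\mathbf{I}^{1-\beta;\Psi}_{a^+}h=\mathbf{I}^{1;\Psi}_{a^+}\mathbf{I}^{-\beta;\Psi}_{a^+}h$ only in the semigroup sense (never leaving the regime $\alpha>0$ for $\mathbf{I}^{\alpha}$) and differentiating under the integral, using that $D_\Psi\mathbf{I}^{1;\Psi}_{a^+}[\phi](t)=\phi(t)$ from the fundamental theorem of calculus after the change of variables $u=\Psi(s)$; the regularity $h\in C^1(\Delta,\mathbb{R})$ provides all the continuity needed for these manipulations and for the vanishing of the boundary trace.
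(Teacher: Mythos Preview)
The paper does not give its own proof of this lemma; it is quoted verbatim from \cite{Sousa1} as a preliminary result. So there is nothing to compare your proposal against within this paper.

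That said, your argument is the standard one and is essentially how the result is proved in the cited source. The bookkeeping $\rho=\mu+\beta=1-\gamma$ and $\gamma+\mu=1-\beta$ is correct, and the reductions to the $\Psi$-Caputo/$\Psi$-Riemann--Liouville composition identities are exactly the right mechanism. A few small remarks:

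\begin{itemize}
\item In part (ii) your final step uses $\mathbf{I}^{\beta;\Psi}_{a^+}\mathbf{D}^{\beta;\Psi}_{a^+}h(t)=h(t)-\Omega_\Psi^\beta(t,a)\,\mathbf{I}^{1-\beta;\Psi}_{a^+}h(a)$ and then drops the boundary term because $h$ is continuous at $a$ and $1-\beta>0$. This is fine, but note that the endpoint $\nu=0$ gives $\beta=0$; in that case the identity degenerates and you should argue directly that ${}^H\mathbf{D}^{\mu,0;\Psi}_{a^+}\mathbf{I}^{\mu;\Psi}_{a^+}h=D_\Psi\mathbf{I}^{1;\Psi}_{a^+}h=h$ from the fundamental theorem of calculus.
\item Symmetrically, in part (i) the endpoint $\nu=1$ gives $\gamma=0$, and the argument again collapses to a direct FTC computation rather than the Caputo--RL bridge.
\item It is worth observing (and perhaps flagging) that under the stated hypothesis $h\in C^1(\Delta,\mathbb{R})$ and for $\nu<1$, the trace $\mathbf{I}^{(1-\nu)(1-\mu);\Psi}_{a^+}h(a)$ actually vanishes by the same continuity argument you use in (ii), so the formula in (i) simplifies to $h(t)$. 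The nonzero boundary term in (i) is genuinely needed only for less regular $h$ (e.g., $h\in\mathbf{C}_{1-\rho;\Psi}$), which is the setting the rest of the paper works in; the $C^1$ hypothesis in the statement here is a convenient simplification carried over from the source.
\end{itemize}

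With those edge cases handled explicitly, your proof is complete.
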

\begin{definition} [\cite{Kilbas}]
Let $\mu >0, \nu >0$.The one parameter Mittag-Leffler function is defined as
$$E_{\mu}(z)=\sum_{k=0}^{\infty}\frac{z^k}{\Gamma(k \mu +1)},$$
and the two parameter Mittag-Leffler function is defined as
$$E_{\mu,\nu}(z)=\sum_{k=0}^{\infty}\frac{z^k}{\Gamma(k \mu +\nu)}.$$
\end{definition}
\section{Existence and Uniqueness results}
In this section we derive the existence and uniqueness results of  the Cauchy-type problem \eqref{e11}-\eqref{e12} by utilizing the following  modified version of contraction principle.
\begin{lemma}[\cite{Siddiqi}] \label{lem31}
Let $\mathcal{X}$ be a Banach space and let $\mathcal{T}$ be an operator which maps the element of $\mathcal{X}$ into itself for which $\mathcal{T}^r$ is a contraction, where $r$ is a positive integer then $\mathcal{T}$ has a unique fixed point.
\end{lemma}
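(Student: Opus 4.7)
The plan is to reduce the statement to the classical Banach fixed point theorem applied not to $\mathcal{T}$ itself but to its $r$-th iterate. Since $\mathcal{T}^r$ is by hypothesis a contraction on the Banach space $\mathcal{X}$, Banach's theorem immediately furnishes a unique point $x^{*}\in \mathcal{X}$ with $\mathcal{T}^{r}(x^{*})=x^{*}$. The entire task is then to promote this fixed point of $\mathcal{T}^{r}$ to a fixed point of $\mathcal{T}$.

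The key observation I would use is the elementary commutation identity $\mathcal{T}\circ \mathcal{T}^{r}=\mathcal{T}^{r}\circ \mathcal{T}$. Applying $\mathcal{T}$ to both sides of the equation $\mathcal{T}^{r}(x^{*})=x^{*}$ yields
\[
\mathcal{T}^{r}\bigl(\mathcal{T}(x^{*})\bigr)=\mathcal{T}\bigl(\mathcal{T}^{r}(x^{*})\bigr)=\mathcal{T}(x^{*}),
\]
so $\mathcal{T}(x^{*})$ is also a fixed point of $\mathcal{T}^{r}$. By the uniqueness clause of Banach's theorem applied to $\mathcal{T}^{r}$, this forces $\mathcal{T}(x^{*})=x^{*}$, giving the existence of a fixed point for $\mathcal{T}$.

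For uniqueness, I would argue in the reverse direction: if $y\in\mathcal{X}$ satisfies $\mathcal{T}(y)=y$, then iterating the equation $r$ times shows $\mathcal{T}^{r}(y)=y$, so $y$ is a fixed point of the contraction $\mathcal{T}^{r}$ and must coincide with $x^{*}$. Combining the two directions gives both existence and uniqueness of a fixed point of $\mathcal{T}$.

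There is really no serious obstacle here; the only subtlety worth flagging is that one should not try to estimate $\|\mathcal{T}x-\mathcal{T}y\|$ directly (this may fail, since $\mathcal{T}$ itself need not be a contraction). The argument is entirely structural: apply Banach's theorem once to obtain $x^{*}$, exploit the commutation $\mathcal{T}\mathcal{T}^{r}=\mathcal{T}^{r}\mathcal{T}$ to transport the fixed point property, and invoke uniqueness from Banach's theorem a second time to close the argument.
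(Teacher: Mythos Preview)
Your argument is correct and is precisely the standard proof of this classical result. Note, however, that the paper does not supply its own proof of this lemma: it is merely quoted from the reference \cite{Siddiqi} and used as a black box in the proof of Theorem~\ref{th3.1}. There is therefore nothing in the paper to compare your proof against, but what you have written is the textbook derivation and would be accepted without reservation.
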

\begin{theorem} \label{th3.1}
 Let  $0<\mu<1$ and  $0\leq\nu\leq 1,$ and $\rho=\mu+\nu-{\mu \nu}.$
 Let $f:(a,b] \times \mathbb{R} \to \mathbb{R}$ be a function such that $f(\cdot,y(\cdot))\in \mathbf{C}_{{1-\rho};\,\Psi}(\Delta,\mathbb{R})$ for any $y\in \mathbf{C}_{{1-\rho};\, \Psi}(\Delta,\mathbb{R}),$ and let f satisfies the Lipschitz condition with respect to second argument
\begin{align}
 |f(t,y_1)-f(t,y_2)|\leq L |y_1-y_2|
\end{align}
for all $t\in (a,b]$ and for all $y_1,y_2\in \mathbb{R}$, where $L>0$ is a Lipschitz constant.Then the Cauchy problem \eqref{e11}-\eqref{e12} has unique solution in $\mathbf{C}_{{1-\rho};\, \Psi}(\Delta,\mathbb{R}).$  
\end{theorem}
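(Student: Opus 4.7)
The plan is to reduce the Cauchy problem \eqref{e11}-\eqref{e12} to a fixed-point problem and then invoke the generalized contraction principle of Lemma \ref{lem31}. Applying $\mathbf{I}_{a^+}^{\mu;\,\Psi}$ to \eqref{e11} and using Lemma \ref{lem2.3}(i) together with the initial condition \eqref{e12} gives the equivalent integral equation
\begin{equation*}
y(t)=\Omega_{\Psi}^{\rho}(t,a)\,y_a+\mathbf{I}_{a^+}^{\mu;\,\Psi}f(t,y(t)),\qquad t\in\Delta,
\end{equation*}
so solutions of \eqref{e11}-\eqref{e12} are exactly fixed points of the operator $\mathcal{T}:\mathbf{C}_{1-\rho;\,\Psi}(\Delta,\mathbb{R})\to\mathbf{C}_{1-\rho;\,\Psi}(\Delta,\mathbb{R})$ defined by $(\mathcal{T}y)(t)=\Omega_{\Psi}^{\rho}(t,a)\,y_a+\mathbf{I}_{a^+}^{\mu;\,\Psi}f(t,y(t))$. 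The self-mapping property follows quickly: the first term obviously lies in $\mathbf{C}_{1-\rho;\,\Psi}$, and for the integral term the hypothesis $f(\cdot,y(\cdot))\in\mathbf{C}_{1-\rho;\,\Psi}$ combined with the boundedness result of Lemma \ref{lem22} shows $\mathbf{I}_{a^+}^{\mu;\,\Psi}f(\cdot,y(\cdot))\in\mathbf{C}_{1-\rho;\,\Psi}$. Thus $\mathcal{T}$ is well defined on the Banach space $\mathbf{C}_{1-\rho;\,\Psi}(\Delta,\mathbb{R})$.

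The core of the argument is the contraction estimate. For $y_1,y_2\in\mathbf{C}_{1-\rho;\,\Psi}$, the Lipschitz hypothesis gives
\begin{equation*}
|(\mathcal{T}y_1)(t)-(\mathcal{T}y_2)(t)|\le L\,\mathbf{I}_{a^+}^{\mu;\,\Psi}|y_1(t)-y_2(t)|.
\end{equation*}
Bounding $|y_1(s)-y_2(s)|\le(\Psi(s)-\Psi(a))^{\rho-1}\|y_1-y_2\|_{\mathbf{C}_{1-\rho;\,\Psi}}$ and invoking Lemma 2.1(ii) with $\delta=\rho$ yields
\begin{equation*}
|(\mathcal{T}y_1)(t)-(\mathcal{T}y_2)(t)|\le \frac{L\,\Gamma(\rho)}{\Gamma(\mu+\rho)}(\Psi(t)-\Psi(a))^{\mu+\rho-1}\|y_1-y_2\|_{\mathbf{C}_{1-\rho;\,\Psi}}.
\end{equation*}
Iterating this inequality and applying Lemma 2.1(ii) at each step, I would prove by induction on $n$ that
\begin{equation*}
|(\mathcal{T}^{n}y_1)(t)-(\mathcal{T}^{n}y_2)(t)|\le \frac{L^{n}\,\Gamma(\rho)}{\Gamma(n\mu+\rho)}(\Psi(t)-\Psi(a))^{n\mu+\rho-1}\|y_1-y_2\|_{\mathbf{C}_{1-\rho;\,\Psi}}.
\end{equation*}
Multiplying by $(\Psi(t)-\Psi(a))^{1-\rho}$ and taking the maximum over $\Delta$ gives
\begin{equation*}
\|\mathcal{T}^{n}y_1-\mathcal{T}^{n}y_2\|_{\mathbf{C}_{1-\rho;\,\Psi}}\le \frac{L^{n}\,\Gamma(\rho)(\Psi(b)-\Psi(a))^{n\mu}}{\Gamma(n\mu+\rho)}\,\|y_1-y_2\|_{\mathbf{C}_{1-\rho;\,\Psi}}.
\end{equation*}

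The main obstacle is that the constant $L\,\Gamma(\rho)(\Psi(b)-\Psi(a))^{\mu}/\Gamma(\mu+\rho)$ from the first iterate need not be less than one, so $\mathcal{T}$ itself may fail to be a contraction, and this is precisely why Lemma \ref{lem31} is needed. The fix is to observe that the sequence $c_n:=L^{n}\Gamma(\rho)(\Psi(b)-\Psi(a))^{n\mu}/\Gamma(n\mu+\rho)$ is the general term of the (convergent) Mittag-Leffler series $\Gamma(\rho)\,E_{\mu,\rho}\bigl(L(\Psi(b)-\Psi(a))^{\mu}\bigr)$, so in particular $c_n\to 0$; hence there exists $r\in\mathbb{N}$ with $c_r<1$, which makes $\mathcal{T}^{r}$ a strict contraction. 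Lemma \ref{lem31} then produces a unique fixed point of $\mathcal{T}$ in $\mathbf{C}_{1-\rho;\,\Psi}(\Delta,\mathbb{R})$, which is the unique solution of the Cauchy-type problem \eqref{e11}-\eqref{e12}.
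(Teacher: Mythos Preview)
Your proposal is correct and follows essentially the same route as the paper: reduce to the equivalent integral equation, define the integral operator, verify it is a self-map via Lemma~\ref{lem22}, prove by induction the iterated contraction estimate with constant $\Gamma(\rho)\,L^{n}(\Psi(b)-\Psi(a))^{n\mu}/\Gamma(n\mu+\rho)$, recognize this as the general term of the convergent Mittag--Leffler series $E_{\mu,\rho}\bigl(L(\Psi(b)-\Psi(a))^{\mu}\bigr)$ (hence tending to $0$), and invoke Lemma~\ref{lem31}. The only cosmetic difference is that the paper carries out the induction in the family of subinterval norms $\|\cdot\|_{\mathbf{C}_{t;\,\Psi}}$ while you work with pointwise bounds against the full norm $\|\cdot\|_{\mathbf{C}_{1-\rho;\,\Psi}}$ on the right-hand side; both arguments yield the identical final estimate.
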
 
 \begin{proof}
The equivalent fractional integral to the  initial value problem \eqref{e11}-\eqref{e12} is given by \cite{Sousa1} 
\begin{align}
y(t)&=\Omega_{\Psi}^{\rho}(t,a)\,y_a+ \mathbf{I}_{a^+}^{\mu;\, \Psi}f(t,y(t))\nonumber\\
&=\Omega_{\Psi}^{\rho}(t,a)\,y_a+\frac{1}{\Gamma(\mu)}\int_{a}^{t}\mathcal{L}_{\Psi}^{\mu}(t,\eta) f(\eta, y(\eta))\, d\eta,\, t \in (a,b],\label{e13}
\end{align}
Our aim is to prove that the fractional integral \eqref{e13} has a solution in the weighted space $\mathbf{C}_{{1-\rho};\, \Psi}(\Delta,\mathbb{R})$. Consider the operator $\mathbb{T}$ defined on $:\mathbf{C}_{{1-\rho};\, \Psi}(\Delta,\mathbb{R})$ by
 \begin{align} \label{e14}
 (\mathbb{T}y)(t)&=\Omega_{\Psi}^{\rho}(t,a)\,y_a+\frac{1}{\Gamma(\mu)}\int_{a}^{t}\mathcal{L}_{\Psi}^{\mu}(t,\eta)  f(\eta, y(\eta))\, d\eta.
 \end{align}
 By lemma \ref{lem22}, it follows that $\mathbf{I}_{a^+}^{\mu;\, \Psi} f(.,y(.)) \in \mathbf{C}_{{1-\rho};\, \Psi}(\Delta,\mathbb{R})$. Clearly,\,${y_a} \,\Omega_{\Psi}^{\rho}(t,a) \in \mathbf{C}_{{1-\rho};\, \Psi}(\Delta,\mathbb{R})$.  Therefore, from \eqref{e14}, we have  $\mathbb{T}y \in \mathbf{C}_{{1-\rho};\, \Psi}(\Delta,\mathbb{R})$ for any $y\in \mathbf{C}_{{1-\rho};\, \Psi}(\Delta,\mathbb{R})$. This proves T maps $\mathbf{C}_{{1-\rho};\, \Psi}(\Delta,\mathbb{R})$ into itself.
Note that the fractional integral equation \eqref{e13} can be written as fixed point operator equation
$$
y=\mathbb{T}y, ~y\in \mathbf{C}_{{1-\rho};\, \Psi}(\Delta,\mathbb{R}).
$$ 
We prove that  the above operator equation has fixed point which will act as a solution for the problem \eqref{e11}-\eqref{e12}.  For any $t \in (a,b]$, consider the space $\mathbf{C}_{t;\, \Psi} = \mathbf{C}_{{1-\rho};\, \Psi}([a,t],\mathbb{R})$ with the norm defined by,  
 $$
 \|y\|_{\mathbf{C}_{t;\, \Psi}}=\max_{w\in[a,t]}\left |{(\Psi(w)-\Psi(a))}^{1-\rho} y(w)\right |.
 $$
 Using mathematical induction for any  $y_1,y_2\in \mathbf{C}_{t;\, \Psi}$ and $t \in (a,b]$, we prove that
\begin{align} \label{e3.5}
\| \mathbb{T}^jy_1-\mathbb{T}^jy_2\|_{\mathbf{C}_{t;\, \Psi}}\leq \Gamma(\rho)\frac{{(L{(\Psi(t)-\Psi(a))}^\mu)^j}}{\Gamma(j\mu+\rho)}\|y_1-y_2\|_{\mathbf{C}_{t;\, \Psi}},  ~j \in \mathbb{N}.
\end{align}
Let any $y_1,y_2\in \mathbf{C}_{t;\, \Psi}$. Then from the definition of operator $\mathbb{T}$ given in \eqref{e14} and using Lipschitz condition on $f$, we have
\begin{align*} 
&\| \mathbb{T}y_1-\mathbb{T}y_2\|_{\mathbf{C}_{t;\,\Psi}}\\
&= \max_{w\in[0,t]}\left|{(\Psi(w)-\Psi(a))}^{1-\rho}\left(\mathbb{T}y_1(w)-\mathbb{T}y_2(w)\right) \right|\\
&=\max_{w\in[0,t]}\left|{(\Psi(w)-\Psi(a))}^{1-\rho}\frac{1}{\Gamma(\mu)}\int_{a}^{w}\mathcal{L}_{\Psi}^{\mu}(w,\eta) \left( f(\eta, y_1(\eta))-f(\eta,y_2(\eta))\right) \, d\eta\right|\\
&\leq L \max_{w\in[0,t]}\left|{(\Psi(w)-\Psi(a))}^{1-\rho}\frac{1}{\Gamma(\mu)}\int_{a}^{w}\mathcal{L}_{\Psi}^{\mu}(w,\eta)\left|  y_1(\eta)-y_(\eta)\right| \, d\eta\right|\\
&= L \max_{w\in[0,t]}\left|{(\Psi(t)-\Psi(a))}^{1-\rho}\frac{1}{\Gamma(\mu)}\int_{a}^{w}\left\lbrace \mathcal{L}_{\Psi}^{\mu}(w,\eta){(\Psi(\eta)-\Psi(a))}^{\rho-1}\right\rbrace  \times \right.\\
&\left.\qquad\left\lbrace {(\Psi(\eta)-\Psi(a))}^{1-\rho}\left| y_1(\eta)-y_2(\eta)\right| \right\rbrace  \, d\eta\ \right|\\
&\leq \frac{L{(\Psi(t)-\Psi(a))}^{1-\rho}}{\Gamma(\mu)} \int_{a}^{t}\mathcal{L}_{\Psi}^{\mu}(t,\eta){(\Psi(\eta)-\Psi(a))}^{\rho-1}\times\\
&\qquad \max_{w\in[0,\eta]}\left| {(\Psi(w)-\Psi(a))}^{1-\rho}(y_1(w)-y_2(w))\right| \, d\eta\\
&\leq \frac{L{(\Psi(t)-\Psi(a))}^{1-\rho}}{\Gamma(\mu)} \|y_1-y_2\|_{c_{t;\, \Psi}} \int_{a}^{t}\mathcal{L}_{\Psi}^{\mu}(t,\eta){(\Psi(\eta)-\Psi(a))}^{\rho-1}\, d\eta \\
&\leq L \|y_1-y_2\|_{\mathbf{C}_{t;\, \Psi}}~ \left[{(\Psi(t)-\Psi(a))}^{1-\rho}\,\mathbf{I}_{a^+}^{\mu;\, \Psi} {(\Psi(t)-\Psi(a))}^{\rho-1}\right]\\
&=\frac{{(L{(\Psi(t)-\Psi(a))}^\mu)}\Gamma(\rho)}{\Gamma(\mu+\rho)}\|y_1-y_2\|_{\mathbf{C}_{t;\, \Psi}}
\end{align*}
Thus the inequality \eqref{e3.5} holds for $j=1$. Let us suppose that the inequality \eqref{e3.5} holds for $j=r\in\N,$ i.e. suppose
\begin{align} \label{e3.6}
\| \mathbb{T}^ry_1-\mathbb{T}^ry_2\|_{\mathbf{C}_{t;\, \Psi}}\leq \Gamma(\rho) \frac{{(L{(\Psi(t)-\Psi(a))}^\mu)^r}}{\Gamma(r\mu+\rho)}\|y_1-y_2\|_{\mathbf{C}_{t;\, \Psi}}
\end{align}
holds.  Next, we prove that \eqref{e3.5} holds for $j=r+1.$ Let $y_1,y_2\in \mathbf{C}_{t;\, \Psi}$ and denote $y_1^*=\mathbb{T}^ry_1$ and $y_2^*=\mathbb{T}^ry_2.$ Then using the definition of operator T and Lipschitz condition on $f$, we get
\begin{align*}
&\| \mathbb{T}^{r+1}y_1-\mathbb{T}^{r+1}y_2\|_{\mathbf{C}_{t;\, \Psi}}\\
&=\| \mathbb{T}(\mathbb{T}^{r}y_1)-\mathbb{T}(\mathbb{T}^{r}y_2)\|_{\mathbf{C}_{t;\, \Psi}}\\
&=  \| \mathbb{T}y_1^*-\mathbb{T}y_2^*\|_{\mathbf{C}_{t;\, \Psi}} \\
&= \max_{w\in[a,t]}\left| {(\Psi(w)-\Psi(a))}^{1-\rho}\left( \mathbb{T}y_1^*(w)-\mathbb{T}y_2^*(w)\right) \right| \\
&=\max_{w\in[a,t]}\left|{(\Psi(w)-\Psi(a))}^{1-\rho}\frac{1}{\Gamma(\mu)}\int_{a}^{w}\mathcal{L}_{\Psi}^{\mu}(w,\eta) \left( f(\eta, y_1^*(\eta))-f(\eta,y_2^*(\eta)\right)\, d\eta\right|\\
&\leq L \max_{w\in[a,t]}\left\lbrace {(\Psi(w)-\Psi(a))}^{1-\rho}\frac{1}{\Gamma(\mu)}\int_{a}^{w}\left( \mathcal{L}_{\Psi}^{\mu}(w,\eta){(\Psi(\eta)-\Psi(a))}^{\rho-1}\right)  \times \right.\\
& \left. \qquad \left( {(\Psi(\eta)-\Psi(a))}^{1-\rho}\left| y_1^*(\eta)-y_2^*(\eta)\right| \right)\, d\eta\right\rbrace \\
&\leq \frac{L{(\Psi(t)-\Psi(a))}^{1-\rho}}{\Gamma(\mu)}  \int_{a}^{t}\left(\left\{\mathcal{L}_{\Psi}^{\mu}(t,\eta){(\Psi(\eta)-\Psi(a))}^{\rho-1}\right\} \times \right.\\
&\left.\qquad  \max_{w\in[a,\eta]}\left|{(\Psi(w)-\Psi(a))}^{1-\rho}\left(  y_1^*(w)-y_2^*(w)\right) \right|\right)\, d\eta\\
&\leq \frac{L{(\Psi(t)-\Psi(a))}^{1-\rho}}{\Gamma(\mu)}\int_{a}^{t} \mathcal{L}_{\Psi}^{\mu}(t,\eta){(\Psi(\eta)-\Psi(a))}^{\rho-1}\|y_1^*-y_2^*\|_{\mathbf{C}_{\eta;\, \Psi}}\, d\eta
\end{align*}
From (\ref{e3.6}), we have
\begin{align*} 
\|y_1^*-y_2^*\|_{\mathbf{C}_{s;\, \Psi}}=\| \mathbb{T}^ry_1-\mathbb{T}^ry_2\|_{\mathbf{C}_{s;\, \Psi}}\leq \Gamma(\rho)\frac{{(L{(\Psi(s)-\Psi(a))}^\mu)^r}}{\Gamma(r\mu+\rho)}\|y_1-y_2\|_{\mathbf{C}_{s;\, \Psi}}
\end{align*}
Therefore,
\begin{align*}
&\| \mathbb{T}^{r+1}y_1-\mathbb{T}^{r+1}y_2\|_{\mathbf{C}_{t;\, \Psi}}\\
&\leq \frac{L{(\Psi(t)-\Psi(a))}^{1-\rho}}{\Gamma(\mu)}\int_{a}^{t}\mathcal{L}_{\Psi}^{\mu}(t,\eta){(\Psi(\eta)-\Psi(a))}^{\rho-1} \times\\
&\qquad\Gamma(\rho)\frac{{(L{(\Psi(\eta)-\Psi(a))}^\mu)^r}}{\Gamma(r\mu+\rho)}\|y_1-y_2\|_{\mathbf{C}_{\eta;\, \Psi}}\, d\eta\\
&\leq \left( \frac{L^{r+1}\Gamma(\rho)}{{\Gamma(r\mu+\rho)}} \|y_1-y_2\|_{\mathbf{C}_{t;\, \Psi}}\right)  \times\\
& \qquad\left({(\Psi(t)-\Psi(a))}^{1-\rho}\frac{1}{{\Gamma(\mu)}}\int_{a}^{t}\mathcal{L}_{\Psi}^{\mu}(t,\eta){(\Psi(\eta)-\Psi(a))}^{r\mu+\rho-1}\, d\eta\right) \\
&\leq \frac{L^{r+1}\Gamma(\rho)}{\Gamma(r\mu+\rho)}\|y_1-y_2\|_{\mathbf{C}_{t;\, \Psi}}{(\Psi(t)-\Psi(a))}^{1-\rho}\mathbf{I}_{a^+}^\mu {(\Psi(t)-\Psi(a))}^{r\mu+\rho-1}\\
&= \frac{L^{r+1}\Gamma(\rho)}{\Gamma(r\mu+\rho)}\|y_1-y_2\|_{\mathbf{C}_{t;\, \Psi}}{(\Psi(t)-\Psi(a))}^{1-\rho} \frac{\Gamma(r\mu+\rho)}{\Gamma((r+1)\mu+\rho)}\, {(\Psi(t)-\Psi(a))}^{(r+1)\mu+\rho-1}\\
&=\Gamma(\rho)\frac{(L{(\Psi(t)-\Psi(a))}^\mu)^{r+1}}{\Gamma((r+1)\mu+\rho)}\|y_1-y_2\|_{\mathbf{C}_{t;\, \Psi}}
\end{align*}
Thus we have
$$
\| \mathbb{T}^{r+1}y_1-\mathbb{T}^{r+1}y_2\|_{\mathbf{C}_{t;\, \Psi}} \leq \Gamma(\rho)\frac{(L{(\Psi(t)-\Psi(a))}^\mu)^{r+1}}{\Gamma((r+1)\mu+\rho)}\|y_1-y_2\|_{\mathbf{C}_{t;\, \Psi}}
$$
Therefore, by principle of mathematical induction the inequality \eqref{e3.5} holds for all $j\in \N$ and for every t in $\Delta$. As a consequence we find on the fundamental interval $\Delta$,
\begin{align} 
\| \mathbb{T}^jy_1-\mathbb{T}^jy_2\|_{\mathbf{C}_{{1-\rho};\, \Psi}(\Delta,\mathbb{R})}\leq \Gamma(\rho) \frac{{(L{(\Psi(b)-\Psi(a))}^\mu)^j}}{\Gamma(j\mu+\rho)}\|y_1-y_2\|_{\mathbf{C}_{{1-\rho};\, \Psi}(\Delta,\mathbb{R})}
\end{align}
By definition of two parameter Mittag-Leffler function, we have
$$E_{\mu,\rho}(L{(\Psi(b)-\Psi(a))}^\mu)=\sum_{j=0}^{\infty}\frac{(L{(\Psi(b)-\Psi(a))}^\mu)^j}{\Gamma(j\mu+\rho)}$$\\
Note that $\frac{(L{(\Psi(b)-\Psi(a))}^\mu)^j}{\Gamma(j\mu+\rho)}$ is the $j^{th}$ term of the convergent series of real numbers. Therefore,
$$\lim_{j\to\infty}\frac{(L{(\Psi(b)-\Psi(a))}^\mu)^j}{\Gamma(j\mu+\rho)}=0.$$ Thus we can choose $j\in\N$ such that 
$${\Gamma(\rho)}\frac{(L{(\Psi(b)-\Psi(a))}^\mu)^j}{\Gamma(j\mu+\rho)}<1,$$\\
so that $\mathbb{T}^j$ is a contraction.  Therefore, by Lemma \ref{lem31}, $\mathbb{T}$ has a unique fixed point $y^* $ in $\mathbf{C}_{{1-\rho};\, \Psi}(\Delta,\mathbb{R}),$ which is a unique solution of the Cauchy type problem \eqref{e11}-\eqref{e12}.
\end{proof} 
 \begin{rem}
The existence result proved above with no restriction on the interval $\Delta=[a,b]$, and hence solution $y^*$ of \eqref{e11}-\eqref{e12} exists for any $a,b ~(0<a<b<\infty)$. Thus the Theorem \ref{th3.1} guarantees global unique solution in ${\mathbf{C}_{{1-\rho};\, \Psi}}(\Delta,\mathbb{R}) $.
 \end{rem} 
\section{Ulam-Hyers stability}
To discuss HU and HUR stability of \eqref{e11}, we adopt the approach of \cite{Wang,Rus}. For $\epsilon>0$ and continuous function $\phi:\Delta \to[0,\infty)$, we consider the following inequalities:
\begin{align}
\left|^H \mathbf{D}^{\mu,\nu;\,\Psi}_{a^+}y^{*}(t)-f(t,y^{*}(t))\right|&\leq \epsilon,~ t \in \Delta \label{ine1}\\
\left|^H \mathbf{D}^{\mu,\nu;\,\Psi}_{a^+}y^{*}(t)-f(t,y^{*}(t))\right|&\leq \phi(t),~t \in \Delta \label{ine2}\\
\left|^H \mathbf{D}^{\mu,\nu;\,\Psi}_{a^+}y^{*}(t)-f(t,y^{*}(t))\right|&\leq \epsilon \phi(t)\label{ine3},~ t \in \Delta
\end{align}
\begin{definition}
The problem \eqref{e11} has HU stability if there exists a real number $\mathbf{C}_{f}>0$ such that for each $\epsilon>0$ and  for each solution
$y^{*}\in \mathbf{C}_{{1-\rho};\, \Psi}(\Delta,\mathbb{R})$ of the inequation \eqref{ine1} there exists a solution  $y \in \mathbf{C}_{{1-\rho};\, \Psi}(\Delta,\mathbb{R})$ of \eqref{e11} with
$$\|y^{*}-y\|_{\mathbf{C}_{{1-\rho};\, \Psi}(\Delta,\mathbb{R})}\leq C_{f} \,\epsilon. $$
\end{definition}
\begin{definition}
The problem \eqref{e11} has generalized HU stability if there exists a function $C_{f}\in ([0,\infty)),[0,\infty))$ with $C_f(0)=0$ such that for each solution
$y^{*}\in \mathbf{C}_{{1-\rho};\, \Psi}(\Delta,\mathbb{R})$ of the inequation \eqref{ine1} there exists a solution  $y\in \mathbf{C}_{{1-\rho};\, \Psi}(\Delta,\mathbb{R})$ of \eqref{e11} with
$$\|y^{*}-y\|_{\mathbf{C}_{{1-\rho};\, \Psi}(\Delta,\mathbb{R})}\leq C_{f}(\epsilon). $$
\end{definition}
\begin{definition}
The problem \eqref{e11} has HUR stability with respect to a function $\phi$ if there exists a real number $C_{f,\phi}>0$ such that for each solution
$y^{*}\in \mathbf{C}_{{1-\rho};\, \Psi}(\Delta,\mathbb{R})$ of the inequation \eqref{ine3} there exists a solution  $y\in \mathbf{C}_{{1-\rho};\, \Psi}(\Delta,\mathbb{R})$ of \eqref{e11} with
$$|(\Psi(t)-\Psi(a))^{1-\rho}(y^{*}(t)-y(t))| \leq C_{f,\phi}\,\epsilon \,\phi(t), ~t \in (\Delta,\mathbb{R}). $$
\end{definition}
\begin{definition}
The problem \eqref{e11} has generalized HUR stability with respect to a function $\phi$ if there exists a real number $C_{f,\phi}>0$ such that for each solution
$y^{*}\in \mathbf{C}_{{1-\rho};\, \Psi}(\Delta,\mathbb{R})$ of the inequation \eqref{ine2} there exists a solution  $y\in \mathbf{C}_{{1-\rho};\, \Psi}(\Delta,\mathbb{R})$ of \eqref{e11} with
$$|(\Psi(t)-\Psi(a))^{1-\rho}(y^{*}(t)-y(t))| \leq C_{f,\phi} \phi(t), ~~~t \in \Delta. $$
\end{definition}
In the next theorem we will make use of the successive approximation method to prove that the $\Psi$--Hilfer FDE \eqref{e11} is HU stable.
\begin{theorem}\label{thm4.2}
 Let $f:(a,b] \times \mathbb{R} \to \mathbb{R} $ be a function such that $f(\cdot,y(\cdot))\in \mathbf{C}_{{1-\rho};\, \Psi}(\Delta,\mathbb{R})$ for any $y\in \mathbf{C}_{{1-\rho};\, \Psi}(\Delta,\mathbb{R})$ and that satisfies the Lipschitz condition 
\begin{align*}
 |f(t,y_1)-f(t,y_2)|\leq L |y_1-y_2|, ~t\in (a,b], ~y_1,y_2\in \mathbb{R},
\end{align*}
where $L>0$ is a constant.
  For every $\epsilon>0,$ if $y^{*}\in \mathbf{C}_{{1-\rho};\, \Psi}(\Delta,\mathbb{R})$ satisfies 
  $$\left|^H \mathbf{D}^{\mu,\nu;\,\Psi}_{a^+}y^{*}(t)-f(t,y^{*}(t))\right|\leq \epsilon,~ t \in \Delta$$
  then there exists a solution $y$ of equation \eqref{e11} in $\mathbf{C}_{{1-\rho};\, \Psi}(\Delta,\mathbb{R})$ with $\mathbf{I}_{a^+}^{{1-\rho};\, \Psi}y^{*}(a)=\mathbf{I}_{a^+}^{{1-\rho};\, \Psi}y(a)$ such that 
  $$\|y^{*}-y\|_{\mathbf{C}_{{1-\rho};\, \Psi}(\Delta,\mathbb{R})}\leq \left(\frac{(E_\mu(L(\Psi(b)-\Psi(a))^\mu)-1)}{L}(\Psi(b)-\Psi(a))^{1-\rho}\right) \epsilon,  ~t\in \Delta.$$ 
 \end{theorem}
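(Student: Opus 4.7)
The plan is to build Picard-type iterates rooted at $y^{*}$, show by induction that consecutive differences are majorised by a series summing to a Mittag--Leffler function, and telescope to obtain the advertised bound. First, I apply $\mathbf{I}_{a^+}^{\mu;\,\Psi}$ to the given differential inequality. On the left, Lemma~\ref{lem2.3}(i) gives $\mathbf{I}_{a^+}^{\mu;\,\Psi}\,{}^H\mathbf{D}^{\mu,\nu;\,\Psi}_{a^+}y^{*}(t) = y^{*}(t) - \Omega_\Psi^\rho(t,a)\,\mathbf{I}_{a^+}^{1-\rho;\,\Psi}y^{*}(a)$, while on the right the power identity of Lemma~2.1(ii) with $\delta=1$ evaluates $\mathbf{I}_{a^+}^{\mu;\,\Psi}\epsilon = \epsilon(\Psi(t)-\Psi(a))^{\mu}/\Gamma(\mu+1)$. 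Combining,
$$\left|y^{*}(t) - \Omega_\Psi^\rho(t,a)\,\mathbf{I}_{a^+}^{1-\rho;\,\Psi}y^{*}(a) - \mathbf{I}_{a^+}^{\mu;\,\Psi}f(t,y^{*}(t))\right| \le \frac{\epsilon(\Psi(t)-\Psi(a))^{\mu}}{\Gamma(\mu+1)}.$$

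I then define $y_{0}(t) = y^{*}(t)$ and iteratively
$$y_{n+1}(t) = \Omega_\Psi^\rho(t,a)\,\mathbf{I}_{a^+}^{1-\rho;\,\Psi}y^{*}(a) + \mathbf{I}_{a^+}^{\mu;\,\Psi}f(t,y_{n}(t)), \qquad n \ge 0.$$
By construction any uniform limit $y$ of this sequence satisfies the fixed-point equation \eqref{e13} with matched initial datum $\mathbf{I}_{a^+}^{1-\rho;\,\Psi}y(a) = \mathbf{I}_{a^+}^{1-\rho;\,\Psi}y^{*}(a)$, so by Theorem~\ref{th3.1} it is the unique solution of \eqref{e11}--\eqref{e12} with that datum. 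By induction on $n$ I prove the central estimate
$$|y_{n+1}(t) - y_{n}(t)| \le \frac{\epsilon\,L^{n}(\Psi(t)-\Psi(a))^{(n+1)\mu}}{\Gamma((n+1)\mu+1)}.$$
The base case is the inequality just derived; the inductive step combines the Lipschitz bound $|y_{n+1}-y_{n}| \le L\,\mathbf{I}_{a^+}^{\mu;\,\Psi}|y_{n} - y_{n-1}|$ with Lemma~2.1(ii) applied to the monomial $(\Psi(\cdot)-\Psi(a))^{n\mu}$ (taking $\delta = n\mu+1$), so that the $\Gamma$-factors telescope neatly to $\Gamma(n\mu+1)/\Gamma((n+1)\mu+1)$.

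After shifting the index, the resulting majorant series equals $(\epsilon/L)\bigl(E_{\mu}(L(\Psi(t)-\Psi(a))^{\mu}) - 1\bigr)$. Since the combined weighted exponent $(n+1)\mu + 1 - \rho = (n+1)\mu + (1-\mu)(1-\nu)$ is nonnegative, multiplying by $(\Psi(t)-\Psi(a))^{1-\rho}$ preserves the majorisation uniformly on $\Delta$, with the supremum attained at $t=b$; thus the iterates converge in $\mathbf{C}_{1-\rho;\,\Psi}(\Delta,\mathbb{R})$. Setting $y = \lim_{n}y_{n}$, telescoping gives
$$|y^{*}(t) - y(t)| \le \sum_{n=0}^{\infty} |y_{n+1}(t) - y_{n}(t)| \le \frac{\epsilon}{L}\bigl(E_{\mu}(L(\Psi(t)-\Psi(a))^{\mu}) - 1\bigr),$$
and multiplying by $(\Psi(t)-\Psi(a))^{1-\rho}$ and taking the maximum over $\Delta$ delivers the claimed estimate.

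The step I expect to require the most care is the induction: the gamma ratios coming out of Lemma~2.1(ii) must line up exactly with the denominators of the Mittag--Leffler series for the telescoping to succeed. The secondary technical point—justifying the commutation of limit and $\mathbf{I}_{a^+}^{\mu;\,\Psi}f(t,\cdot)$ as $y_{n}\to y$—is routine given the weighted uniform convergence together with the Lipschitz continuity of $f$.
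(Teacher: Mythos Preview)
Your proposal is correct and follows essentially the same route as the paper: build Picard iterates starting from $y^{*}$, prove by induction a bound on $y_{n+1}-y_n$ whose sum is a truncated Mittag--Leffler series, and telescope. The only noteworthy difference is that the paper carries the weight $(\Psi(t)-\Psi(a))^{1-\rho}$ through the induction (working in the local norm $\|\cdot\|_{\mathbf{C}_{t;\Psi}}$ and inserting the factor $(\Psi(\eta)-\Psi(a))^{\rho-1}(\Psi(\eta)-\Psi(a))^{1-\rho}$ inside each integral), whereas you work with the unweighted pointwise bound $|y_{n+1}(t)-y_n(t)|$ and apply the weight only at the end; your version is a little cleaner and the gamma bookkeeping is identical.
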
 
 \begin{proof}
Fix any $\epsilon>0,$ let $z\in \mathbf{C}_{{1-\rho};\, \Psi}(\Delta,\mathbb{R})$ satisfies 
\begin{align}\label{e4.40}
\left|^H \mathbf{D}^{\mu,\nu;\,\Psi}_{a^+}y^{*}(t)-f(t,y^{*}(t))\right|\leq \epsilon,~~ t \in \Delta.
\end{align}
Then there exists a function $\sigma_{y^{*}} \in \mathbf{C}_{{1-\rho};\, \Psi}(\Delta,\mathbb{R})$ ( depending on $y^{*}$ ) such that $|\sigma_{y^{*}}(t)|\leq\epsilon,~t\in \Delta$ and
\begin{align}\label{e4.4}
^H \mathbf{D}^{\mu,\nu;\,\Psi}_{a^+}y^{*}(t) = f(t,{y^{*}}(t))+ \sigma_{y^{*}}(t), ~t\in \Delta.
\end{align}
If $y^{*}(t)$ satisfies \eqref{e4.4} then it satisfies equivalent fractional integral equation
\begin{align}\label{e4.5}
y^{*}(t)& =\Omega_{\Psi}^{\rho}(t,a)\,
\mathbf{I}_{a^+}^{{1-\rho};\, \Psi}y^{*}(a) +\frac{1}{\Gamma(\mu)}\int_{a}^{t}\mathcal{L}_{\Psi}^{\mu}(t,\eta)  f(\eta, y^{*}(\eta))\, d\eta \nonumber\\
&+ \frac{1}{\Gamma(\mu)}\int_{a}^{t}\mathcal{L}_{\Psi}^{\mu}(t,\eta) \sigma_{y^{*}}(\eta)\, d\eta, ~t \in \Delta.
\end{align}
Define
\begin{align}\label{e4.6}
y_0(t)=y^{*}(t),   ~~t\in \Delta 
\end{align}
and consider the sequence ${\{y_n\}}_{n=1}^{\infty}\subseteq \mathbf{C}_{{1-\rho};\, \Psi}(\Delta,\mathbb{R})$ defined by
\begin{align}\label{e4.7}
y_n(t)= \Omega_{\Psi}^{\rho}(t,a)\, \mathbf{I}_{a^+}^{{1-\rho};\, \Psi}y^{*}(a)+\frac{1}{\Gamma(\mu)}\int_{a}^{t}\mathcal{L}_{\Psi}^{\mu}(t,\eta)  f(\eta, y_{n-1}(\eta))\, d\eta, ~t \in \Delta .
\end{align}
Using mathematical induction firstly we prove that for every $t\in \Delta$ and 
$y_j\in \mathbf{C}_{{1-\rho};\, \Psi}[a,t]=\mathbf{C}_{t;\, \Psi}$
\begin{align}\label{ineq4.8}
\| y_j-y_{j-1}\|_{\mathbf{C}_{t;\, \Psi}}\leq \frac{\epsilon}{L}\frac{(L {(\Psi(t)-\Psi(a))}^\mu)^j}{\Gamma(j\mu+1)}\,{(\Psi(t)-\Psi(a))}^{1-\rho}, ~j\in\N.
\end{align}
By definition of successive approximations and using \eqref{e4.5} we have
\begin{align*} 
&\| y_1-y_0\|_{\mathbf{C}_{t;\, \Psi}} \\
&= \max_{w\in[a,t]}|{(\Psi(w)-\Psi(a))}^{1-\rho}\left\lbrace  y_1(w)-y_0(w)\right\rbrace |\\
&=\max_{w\in[0,t]}\left|{(\Psi(w)-\Psi(a))}^{1-\rho}\left(\Omega_{\Psi}^{\rho}(w,a)\,\mathbf{I}_{a^+}^{{1-\rho};\, \Psi}z(a)+\mathbf{I}_{a+}^{\mu;\, \Psi}f(w, y_0(w))-y_0(w)\right)\right|\\
&=\max_{w\in[0,t]}\left|{(\Psi(w)-\Psi(a))}^{1-\rho}\left(\Omega_{\Psi}^{\rho}(w,a)\,\mathbf{I}_{a^+}^{{1-\rho};\, \Psi}z(a)+\mathbf{I}_{a+}^{\mu;\, \Psi}f(w, z(w))-z(w)\right)\right|\\
&=\max_{w\in[0,t]}\left|{(\Psi(w)-\Psi(a))}^{1-\rho}\frac{1}{\Gamma(\mu)}\int_{a}^{w}\mathcal{L}_{\Psi}^{\mu}(w,\eta) \sigma_z(\eta)\, d\eta\right|\\
&\leq\max_{w\in[0,t]}\left[{(\Psi(w)-\Psi(a))}^{1-\rho}\frac{1}{\Gamma(\mu)}\int_{a}^{w}\mathcal{L}_{\Psi}^{\mu}(w,\eta) |\sigma_z(\eta)|\, d\eta\right]\\
&\leq\epsilon\max_{w\in[0,t]}\left[{(\Psi(w)-\Psi(a))}^{1-\rho}\frac{1}{\Gamma(\mu)}\int_{a}^{w}\mathcal{L}_{\Psi}^{\mu}(w,\eta)\, d\eta\right]\\
&\leq\frac{\epsilon}{\Gamma(\mu+1)} {(\Psi(t)-\Psi(a))}^{1-\rho} {(\Psi(t)-\Psi(a))}^\mu\\
&=\frac{\epsilon}{L}\frac{(L{(\Psi(t)-\Psi(a))}^\mu)}{\Gamma(\mu+1)}{(\Psi(t)-\Psi(a))}^{1-\rho}
\end{align*}
Therefore, $$\| y_1-y_0\|_{\mathbf{C}_{t;\, \Psi}}\leq \frac{\epsilon}{L}\frac{(L{(\Psi(t)-\Psi(a))}^\mu)}{\Gamma(\mu+1)}{(\Psi(t)-\Psi(a))}^{1-\rho}$$
which proves the inequality \eqref{ineq4.8} for j = 1.
Let us suppose that the inequality \eqref{ineq4.8} holds for $j=r\in\N,$ we prove it for $j=r+1$.  By definition of successive approximations and Lipschitz condition on $f$, we obtain
\begin{align*} 
&\| y_{r+1}-y_r\|_{\mathbf{C}_{t;\, \Psi}}\\
&= \max_{w\in[0,t]}\left|{(\Psi(w)-\Psi(a))}^{1-\rho}\left\{y_{r+1}(w)-y_r(w)\right\}\right|\\
&=\max_{w\in[0,t]}\left|{(\Psi(w)-\Psi(a))}^{1-\rho}\left(\mathbf{I}_{a+}^{\mu;\, \Psi}f(w, y_r(w))-\mathbf{I}_{a+}^{\mu;\, \Psi}f(w, y_{r-1}(w))\right)\right|\\
&\leq L\max_{w\in[0,t]}\left[{(\Psi(w)-\Psi(a))}^{1-\rho}\frac{1}{\Gamma(\mu)}\int_{a}^{w}\mathcal{L}_{\Psi}^{\mu}(w,\eta)\,\left|y_r(\eta)-y_{r-1}(\eta)\right|\, d\eta\right]\\
&\leq \frac{L{(\Psi(t)-\Psi(a))}^{1-\rho}}{\Gamma(\mu)} \int_{a}^{t}\mathcal{L}_{\Psi}^{\mu}(t,\eta){(\Psi(\eta)-\Psi(a))}^{\rho-1} \times\\
&~~~~~~\max_{w\in[0,\eta]}\left|{(\Psi(w)-\Psi(a))}^{1-\rho}\left\lbrace y_r(w)-y_{r-1}(w)\right\rbrace \right|\, d\eta \\
&= \frac{L{(\Psi(t)-\Psi(a))}^{1-\rho}}{\Gamma(\mu)}  \int_{a}^{t}\mathcal{L}_{\Psi}^{\mu}(t,\eta){(\Psi(\eta)-\Psi(a))}^{\rho-1}
\|y_r-y_{r-1}\|_{\mathbf{C}_{\eta;\, \Psi}}\, d\eta
\end{align*}
Using the inequality \eqref{ineq4.8} for j=r, we have
\begin{align*} 
\| y_{r+1}-y_r\|_{\mathbf{C}_{t;\, \Psi}}
&\leq \frac{L{(\Psi(t)-\Psi(a))}^{1-\rho}}{\Gamma(\mu)}  \int_{a}^{t}\mathcal{L}_{\Psi}^{\mu}(t,\eta){(\Psi(\eta)-\Psi(a))}^{\rho-1} \times\\
&\qquad \left( \frac{\epsilon}{L}\frac{(L{(\Psi(\eta)-\Psi(a))}^\mu)^r}{\Gamma(r\mu+1)}{(\Psi(\eta)-\Psi(a))}^{1-\rho}\right) \, d\eta\\
&=\frac{\epsilon}{L}\frac{L^{r+1}}{\Gamma(r\mu+1)}{(\Psi(t)-\Psi(a))}^{1-\rho}\mathbf{I}_{a^+}^{\mu;\, \Psi} {(\Psi(t)-\Psi(a))}^{r\mu}\\
&=\frac{\epsilon}{L}\frac{L^{r+1}}{\Gamma(r\mu+1)}{(\Psi(t)-\Psi(a))}^{1-\rho}\frac{\Gamma(r\mu+1)}{\Gamma((r+1)\mu+1)}\,{(\Psi(t)-\Psi(a))}^{(r+1)\mu}
\end{align*}
Therefore,
$$\| y_{r+1}-y_r\|_{\mathbf{C}_{t;\, \Psi}}\leq \frac{\epsilon}{L}\frac{(L{(\Psi(t)-\Psi(a))}^\mu)^{r+1}}{\Gamma((r+1)\mu+1)}{(\Psi(t)-\Psi(a))}^{1-\rho},$$
which is the inequality (\ref{ineq4.8}) for $j=r+1.$ Using principle of mathematical induction the inequality (\ref{ineq4.8}) holds for every $j\in \N$ and every $t\in \Delta.$\\
Therefore,~~~~$$\| y_j-y_{j-1}\|_{\mathbf{C}_{{1-\rho};\, \Psi}(\Delta,\mathbb{R})}\leq \frac{\epsilon}{L}\frac{(L{(\Psi(b)-\Psi(a))}^\mu)^{j}}{\Gamma(j\mu+1)}{(\Psi(b)-\Psi(a))}^{1-\rho}$$ \\ Now using this estimation we have
\begin{align*} 
\sum_{j=1}^{\infty}\| y_j-y_{j-1}\|_{\mathbf{C}_{{1-\rho};\, \Psi}(\Delta,\mathbb{R})}\leq \frac{\epsilon}{L}{(\Psi(b)-\Psi(a))}^{1-\rho}\sum_{j=1}^{\infty}\frac{(L{(\Psi(b)-\Psi(a))}^\mu)^j}{\Gamma(j\mu+1)}
\end{align*}
Thus we have
\begin{align} 
\sum_{j=1}^{\infty}\| y_j-y_{j-1}\|_{\mathbf{C}_{{1-\rho};\, \Psi}(\Delta,\mathbb{R})}\leq \frac{\epsilon}{L}{(\Psi(b)-\Psi(a))}^{1-\rho}\left( E_\mu(L{(\Psi(b)-\Psi(a))}^\mu)-1\right) 
\end{align}
Hence the series
\begin{align}\label{s1} 
y_0+\sum_{j=1}^{\infty}(y_j-y_{j-1})
\end{align}
converges in the weighted space ${\mathbf{C}_{{1-\rho};\, \Psi}}(\Delta,\mathbb{R})$.
Let $y\in {\mathbf{C}_{{1-\rho};\, \Psi}}(\Delta,\mathbb{R})$ such that
\begin{align} \label{e4.11}
y=y_0+\sum_{j=1}^{\infty}(y_j-y_{j-1}),
\end{align}
Noting that $$y_n=y_0+\sum_{j=1}^{n}(y_j-y_{j-1})$$ is the $n^{th}$ partial sum of the series \eqref{s1}, we have 
$$
\| y_n-y\|_{\mathbf{C}_{{1-\rho};\, \Psi}(\Delta,\mathbb{R})}\to 0   ~as ~n\to \infty.
$$ 
Next, we prove that this limit function  $y$  is the solution of fractional integral equation with $\mathbf{I}_{a^+}^{{1-\rho};\, \Psi}y^{*}(a)=\mathbf{I}_{a^+}^{{1-\rho};\, \Psi}y(a)$. 
Next, by the definition of successive approximation, for any $t\in \Delta$, we have
\begin{small} 
\begin{align*} 
&\left|(\Psi(t)-\Psi(a))^{1-\rho}\left(y(t)-\Omega_{\Psi}^{\rho}(t,a)\,\mathbf{I}_{a^+}^{{1-\rho};\, \Psi}y(a)-\frac{1}{\Gamma(\mu)}\int_{a}^{t}\mathcal{L}_{\Psi}^{\mu}(t,\eta) f(\eta, y(\eta))\, d\eta\right)\right|\\
&=\left|(\Psi(t)-\Psi(a))^{1-\rho}\left(y(t)-\Omega_{\Psi}^{\rho}(t,a)\,\mathbf{I}_{a^+}^{{1-\rho};\, \Psi}z(a)-\frac{1}{\Gamma(\mu)}\int_{a}^{t}\mathcal{L}_{\Psi}^{\mu}(t,\eta)\,f(\eta,y(\eta))\, d\eta\right)\right|\\
&=\left|(\Psi(t)-\Psi(a))^{1-\rho}\left(y(t)-y_n(t)+\frac{1}{\Gamma(\mu)}\int_{a}^{t}\mathcal{L}_{\Psi}^{\mu}(t,\eta)\,  f(\eta, y_{n-1}(\eta))\, d\eta\right.\right.\\
&\left.\left. \qquad -\frac{1}{\Gamma(\mu)}\int_{a}^{t}\mathcal{L}_{\Psi}^{\mu}(t,\eta)\,  f(\eta,y(\eta))\, d\eta\right)\right|\\
&\leq \left|(\Psi(t)-\Psi(a))^{1-\rho}\left\lbrace y(t)-y_n(t)\right\rbrace \right|+ \left|(\Psi(t)-\Psi(a))^{1-\rho}\,\mathbf{I}_{a^+}^{\mu;\, \Psi}\{f(t,y_{n-1}(t))-f(t,y(t))\}\right|\\
&\leq \left\| y-y_n\right\|_{\mathbf{C}_{{1-\rho};\, \Psi}[a,b]}+L\left[(\Psi(t)-\Psi(a))^{1-\rho}\frac{1}{\Gamma(\mu)}\int_{a}^{t}\mathcal{L}_{\Psi}^{\mu}(t,\eta)\, |y_{n-1}(\eta)-y(\eta)|\, d\eta\right]\\
&\leq \| y-y_n\|_{\mathbf{C}_{{1-\rho};\, \Psi}[a,b]}+ L\|y_{n-1}-y\|_{\mathbf{C}_{{1-\rho};\, \Psi}[a,b]}\, (\Psi(t)-\Psi(a))^{1-\rho}\,\mathbf{I}_{a^+}^{\mu;\, \Psi}(\Psi(t)-\Psi(a))^{\rho-1} \\
&=\|y-y_n\|_{\mathbf{C}_{{1-\rho};\, \Psi}[a,b]}+\left(\frac{L\Gamma\rho}{\Gamma(\mu+\rho)}\,(\Psi(t)-\Psi(a))^\mu\right) \|y_{n-1}-y\|_{\mathbf{C}_{{1-\rho};\, \Psi}[a,b]},~ \forall n \in \mathbb{N}
\end{align*}
\end{small} 
By taking limit as $n \to \infty$ in the above inequality, for all $t\in [a,b]$, we obtain
\begin{small}
\begin{equation*} 
\left|(\Psi(t)-\Psi(a))^{1-\rho}\left(y(t)-\Omega_{\Psi}^{\rho}(t,a)\,\mathbf{I}_{a^+}^{{1-\rho};\, \Psi}y(a)-\frac{1}{\Gamma(\mu)}\int_{a}^{t}\mathcal{L}_{\Psi}^{\mu}(t,\eta)\,  f(\eta, y(\eta))\, d\eta\right)\right|= 0.
\end{equation*}
\end{small}
Since, $(\Psi(t)-\Psi(a))^{1-\rho}\neq 0$ for all $t\in \Delta$, we have  
\begin{align} \label{ineq4.12}
 y(t)=\Omega_{\Psi}^{\rho}(t,a) \,\mathbf{I}_{a^+}^{{1-\rho};\, \Psi}y(a)+\frac{1}{\Gamma(\mu)}\int_{a}^{t}\mathcal{L}_{\Psi}^{\mu}(t,\eta)\, f(\eta, y(\eta))\, d\eta,~ t\in \Delta.
\end{align}
This proves that $y$ is the solution of \eqref{e11}-\eqref{e12} in ${\mathbf{C}_{{1-\rho};\, \Psi}(\Delta,\mathbb{R})}$.
Further, for the solution $y^{*}$ of inequation \eqref{e4.40} and the solution $y$ of the equation \eqref{e11}, using  \eqref{e4.6} and  \eqref{e4.11}, for any $t\in \Delta,$ we have
\begin{align*} 
|(\Psi(t)-\Psi(a))^{1-\rho}(y^{*}(t)-y(t))|
&=\left|(\Psi(t)-\Psi(a))^{1-\rho}\left[y_0(t)-\left(y_0(t)+\sum_{j=1}^{\infty}( y_j(t)-y_{j-1}(t))\right)\right]\right|\\
&\leq \sum_{j=1}^{\infty}\left|(\Psi(t)-\Psi(a))^{1-\rho}(y_j(t)-y_{j-1}(t))\right|\\
&\leq \sum_{j=1}^{\infty}\| y_j-y_{j-1}\|_{\mathbf{C}_{1-\rho}[a,b]}\\
&\leq \frac{\epsilon}{L}(\Psi(b)-\Psi(a))^{1-\rho}(E_\mu(L(\Psi(b)-\Psi(a))^\mu)-1)
\end{align*}
Therefore, $$\|y^{*}-y\|_{\mathbf{C}_{{1-\rho};\, \Psi}[a,b]}\leq \left(\frac{(E_\mu(L(\Psi(b)-\Psi(a))^\mu)-1)}{L}(\Psi(b)-\Psi(a))^{1-\rho}\right) \epsilon $$
This proves the equation \eqref{e11} is HU stable.
\end{proof}
\begin{cor}\label{cor4.2}
Suppose that the function $f$ satisfies the assumptions of Theorem \ref{thm4.2}. 
Then, the problem \eqref{e11} is generalized HU stable. 
\end{cor}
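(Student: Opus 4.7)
The plan is to deduce generalized HU stability directly from the HU stability already established in Theorem \ref{thm4.2}. The two notions differ only in whether the bounding constant is a fixed scalar $C_f$ multiplied by $\epsilon$, or a more general function $C_f(\epsilon)$ with $C_f(0)=0$; every HU stable problem is automatically generalized HU stable, and the proof consists in exhibiting such a function.

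Concretely, I would proceed as follows. Fix an arbitrary $\epsilon>0$ and let $y^{*}\in \mathbf{C}_{{1-\rho};\, \Psi}(\Delta,\mathbb{R})$ satisfy the inequation \eqref{ine1}. By Theorem \ref{thm4.2}, there exists a solution $y\in \mathbf{C}_{{1-\rho};\, \Psi}(\Delta,\mathbb{R})$ of \eqref{e11} such that
\begin{equation*}
\|y^{*}-y\|_{\mathbf{C}_{{1-\rho};\, \Psi}(\Delta,\mathbb{R})}\leq \left(\frac{E_\mu(L(\Psi(b)-\Psi(a))^\mu)-1}{L}(\Psi(b)-\Psi(a))^{1-\rho}\right)\epsilon.
\end{equation*}
Define
\begin{equation*}
C_f(\epsilon)=\left(\frac{E_\mu(L(\Psi(b)-\Psi(a))^\mu)-1}{L}(\Psi(b)-\Psi(a))^{1-\rho}\right)\epsilon,\qquad \epsilon\in[0,\infty).
\end{equation*}
Then $C_f:[0,\infty)\to[0,\infty)$ is continuous with $C_f(0)=0$, and $\|y^{*}-y\|_{\mathbf{C}_{{1-\rho};\, \Psi}(\Delta,\mathbb{R})}\le C_f(\epsilon)$, which is exactly the generalized HU stability property.

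There is essentially no obstacle here; the argument is a one-line observation that the HU stability estimate of Theorem \ref{thm4.2} is linear in $\epsilon$, so the coefficient can be absorbed into a function $C_f$ vanishing at the origin. The only point worth noting is that $C_f$ is independent of $y^{*}$, which is ensured because the constant in Theorem \ref{thm4.2} depends only on $L$, $\mu$, $\rho$, $\Psi$, $a$, and $b$.
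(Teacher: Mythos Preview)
Your proposal is correct and essentially identical to the paper's own proof: both define the function $C_f(\epsilon)$ (the paper calls it $\Psi_f(\epsilon)$) as the constant from Theorem \ref{thm4.2} times $\epsilon$, note that it vanishes at $0$, and conclude generalized HU stability directly. Your additional remarks on continuity and independence from $y^{*}$ are fine but not needed for the definition used in the paper.
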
 
\begin{proof}
Set $$
\Psi_f(\epsilon)=\left(\frac{(E_\mu(L(\Psi(b)-\Psi(a))^\mu)-1)}{L}(\Psi(b)-\Psi(a))^{1-\rho}\right)\epsilon,
$$ 
in the proof of Theorem \ref{thm4.2}. Then $\Psi_f(0)=0$ and for each  $y^{*}\in \mathbf{C}_{{1-\rho};\, \Psi}(\Delta,\mathbb{R})$ that satisfies 
  the inequality $$\left|^H \mathbf{D}^{\mu,\nu;\,\Psi}_{a^+}y^{*}(t)-f(t,y^{*}(t))\right|\leq \epsilon,~ t \in \Delta,$$
 there exists a solution $y$ of equation \eqref{e11} in $\mathbf{C}_{{1-\rho};\, \Psi}(\Delta,\mathbb{R})$ with $\mathbf{I}_{a^+}^{{1-\rho};\, \Psi}y^{*}(a)=\mathbf{I}_{a^+}^{{1-\rho};\, \Psi}y(a)$ such that 
  $$\|y^{*}-y\|_{\mathbf{C}_{{1-\rho};\, \Psi}(\Delta,\mathbb{R})}\leq \Psi_f(\epsilon),~t\in \Delta.$$ 
Hence fractional differential equation \eqref{e11} is generalized HU stable.
\end{proof}
\begin{theorem}\label{Thm4.3}
Let $f:(a,b] \times \mathbb{R}\to \mathbb{R} $ be a function such that $f(\cdot,y(\cdot))\in \mathbf{C}_{{1-\rho};\, \Psi}(\Delta,\mathbb{R})$ for any $y\in \mathbf{C}_{{1-\rho};\, \Psi}(\Delta,\mathbb{R})$ and that satisfies the Lipschitz condition 
\begin{align*}
 |f(t,y_1)-f(t,y_2)|\leq L |y_1-y_2|, ~t\in (a,b], ~y_1,y_2\in \mathbb{R},
\end{align*}
where $L>0$ is a constant.
  For every $\epsilon>0,$ if $y^{*}\in \mathbf{C}_{{1-\rho};\, \Psi}(\Delta,\mathbb{R})$ satisfies 
  $$\left|^H \mathbf{D}^{\mu,\nu;\,\Psi}_{a^+}y^{*}(t)-f(t,y^{*}(t))\right| \leq \epsilon \phi(t),~~ t \in \Delta,$$
  where $\phi\in C(\Delta,\mathbb{R}_+)$ is a non-decreasing function such that 
  $$|\mathbf{I}_{a^+}^{\mu;\, \Psi}\phi (t) | \leq \lambda \phi(t), ~t\in \Delta$$ and $\lambda>0$ is a constant satisfying $0<\lambda L<1$.
  Then, there exists a solution $y \in \mathbf{C}_{{1-\rho};\, \Psi}(\Delta,\mathbb{R})$ of equation \eqref{e11} with $\mathbf{I}_{a^+}^{{1-\rho};\, \Psi}y^{*}(a)=\mathbf{I}_{a^+}^{{1-\rho};\, \Psi}y(a) $ such that 
  $$|(\Psi(t)-\Psi(a))^{1-\rho}(y^{*}(t)-y(t))|\leq \left(\frac{\lambda}{1-\lambda L}\,(\Psi(b)-\Psi(a))^{1-\rho}\right)\epsilon\phi(t),~t \in \Delta.$$ 
\end{theorem}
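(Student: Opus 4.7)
My plan is to mirror the successive approximation scheme used for Theorem \ref{thm4.2}, but with the key modification that the ``remainder'' of the inequality is now controlled pointwise by $\varepsilon\phi(t)$ rather than by $\varepsilon$. As in the proof of Theorem \ref{thm4.2}, I will start by observing that the hypothesis on $y^{*}$ yields a function $\sigma_{y^{*}} \in \mathbf{C}_{1-\rho;\,\Psi}(\Delta,\mathbb{R})$ with $|\sigma_{y^{*}}(t)|\leq \varepsilon\phi(t)$ on $\Delta$ and
\[
^H\mathbf{D}^{\mu,\nu;\,\Psi}_{a^{+}}y^{*}(t)=f(t,y^{*}(t))+\sigma_{y^{*}}(t),
\]
whose equivalent integral form gives the analogue of \eqref{e4.5} with $\sigma_{y^{*}}$ bounded by $\varepsilon\phi$. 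I will then define $y_{0}=y^{*}$ and the iterates $\{y_{n}\}$ exactly as in \eqref{e4.7}, preserving the initial condition $\mathbf{I}_{a^{+}}^{1-\rho;\,\Psi}y^{*}(a)=\mathbf{I}_{a^{+}}^{1-\rho;\,\Psi}y_{n}(a)$ for every $n$.

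The main work is establishing, by induction on $j\in\mathbb{N}$, the pointwise bound
\[
|y_{j}(t)-y_{j-1}(t)|\;\le\;\varepsilon\,\lambda\,(\lambda L)^{j-1}\,\phi(t),\qquad t\in\Delta.
\]
For $j=1$ this follows from $|y_{1}-y_{0}|\leq \mathbf{I}_{a^{+}}^{\mu;\,\Psi}|\sigma_{y^{*}}|\leq \varepsilon\,\mathbf{I}_{a^{+}}^{\mu;\,\Psi}\phi\leq \varepsilon\lambda\phi$, where the last step uses the assumed property $\mathbf{I}_{a^{+}}^{\mu;\,\Psi}\phi\leq\lambda\phi$. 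For the inductive step, the Lipschitz condition on $f$ together with the same inequality for $\phi$ produces
\[
|y_{r+1}(t)-y_{r}(t)|\;\le\;L\,\mathbf{I}_{a^{+}}^{\mu;\,\Psi}|y_{r}-y_{r-1}|(t)\;\le\;\varepsilon\lambda(\lambda L)^{r-1}\cdot L\,\mathbf{I}_{a^{+}}^{\mu;\,\Psi}\phi(t)\;\le\;\varepsilon\lambda(\lambda L)^{r}\phi(t),
\]
which is precisely what we need. This is the step I expect to be the main obstacle, since it hinges on iterating the inequality $\mathbf{I}_{a^{+}}^{\mu;\,\Psi}\phi\le\lambda\phi$ consistently inside the kernel $\mathcal{L}^{\mu}_{\Psi}$; the condition $0<\lambda L<1$ then ensures the telescoping series $y_{0}+\sum_{j=1}^{\infty}(y_{j}-y_{j-1})$ is dominated by a convergent geometric series.

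Multiplying by the weight $(\Psi(t)-\Psi(a))^{1-\rho}$ and taking the maximum over $\Delta$ gives $\|y_{j}-y_{j-1}\|_{\mathbf{C}_{1-\rho;\,\Psi}(\Delta,\mathbb{R})}\leq \varepsilon\lambda(\lambda L)^{j-1}(\Psi(b)-\Psi(a))^{1-\rho}\phi(b)$, so the series converges in the weighted Banach space to some $y\in \mathbf{C}_{1-\rho;\,\Psi}(\Delta,\mathbb{R})$. Passing to the limit inside the integral recursion (exactly as in the last display chain of the proof of Theorem \ref{thm4.2}, using the Lipschitz bound on $f$ and the boundedness of $\mathbf{I}^{\mu;\,\Psi}_{a^{+}}$ from Lemma \ref{lem22}) shows that $y$ satisfies the equivalent integral equation \eqref{e13} with $\mathbf{I}_{a^{+}}^{1-\rho;\,\Psi}y(a)=\mathbf{I}_{a^{+}}^{1-\rho;\,\Psi}y^{*}(a)$, hence solves \eqref{e11}.

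Finally, for the stability estimate, I telescope $y^{*}-y=-\sum_{j=1}^{\infty}(y_{j}-y_{j-1})$ and insert the pointwise bound above:
\[
|(\Psi(t)-\Psi(a))^{1-\rho}(y^{*}(t)-y(t))|\;\le\;(\Psi(t)-\Psi(a))^{1-\rho}\sum_{j=1}^{\infty}\varepsilon\lambda(\lambda L)^{j-1}\phi(t)\;=\;(\Psi(t)-\Psi(a))^{1-\rho}\frac{\varepsilon\lambda}{1-\lambda L}\phi(t).
\]
Bounding $(\Psi(t)-\Psi(a))^{1-\rho}\leq (\Psi(b)-\Psi(a))^{1-\rho}$ (valid since $1-\rho>0$ and $\Psi$ is increasing) yields the desired HUR estimate with constant $C_{f,\phi}=\frac{\lambda}{1-\lambda L}(\Psi(b)-\Psi(a))^{1-\rho}$.
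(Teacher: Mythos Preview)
Your proposal is correct and follows essentially the same approach as the paper: the same integral representation via $\sigma_{y^{*}}$, the same Picard iterates \eqref{e4.7}, and the same inductive bound (your $\varepsilon\lambda(\lambda L)^{j-1}\phi(t)$ coincides with the paper's $\frac{\varepsilon}{L}(\lambda L)^{j}\phi(t)$), summed as a geometric series under $0<\lambda L<1$. The only cosmetic difference is that you run the induction pointwise in $t$ while the paper phrases it in the weighted norm $\|\cdot\|_{\mathbf{C}_{t;\Psi}}$; the two formulations are equivalent since $\phi$ is non-decreasing.
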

\begin{proof}
For every $\epsilon>0,$ let $ y^{*} \in \mathbf{C}_{{1-\rho};\, \Psi}(\Delta,\mathbb{R})$ satisfies 
  $$\left|^H \mathbf{D}^{\mu,\nu;\,\Psi}_{a^+} y^{*}(t)-f(t,y^{*}(t))\right| \leq \epsilon \phi(t),~ t \in \Delta.$$
Proceeding as in the proof of Theorem \ref{thm4.2} there exists a function $\sigma_{y^{*}} \in \mathbf{C}_{{1-\rho};\, \Psi}(\Delta,\mathbb{R})$ (depending on $y^{*}$) such that
\begin{align*}
y^{*}(t)=\Omega_{\Psi}^{\rho}(t,a)\,\mathbf{I}_{a^+}^{{1-\rho};\, \Psi}y^{*}(a)+\mathbf{I}_{a^+}^{\mu;\, \Psi} f(t, y^{*}(t))
+\mathbf{I}_{a^+}^{\mu;\, \Psi}\sigma_{y^{*}}(t),~t \in \Delta,
\end{align*}
Further, using mathematical induction,  one can prove that the sequence of successive approximations  ${\{y_n\}}_{n=1}^{\infty}\subseteq \mathbf{C}_{{1-\rho};\, \Psi}(\Delta,\mathbb{R})$ defined by
\begin{align}\label{e4.7}
y_n(t)= \Omega_{\Psi}^{\rho}(t,a)\, \mathbf{I}_{a^+}^{{1-\rho};\, \Psi}y^{*}(a)+\frac{1}{\Gamma(\mu)}\int_{a}^{t}\mathcal{L}_{\Psi}^{\mu}(t,\eta)\,  f(\eta, y_{n-1}(\eta))\, d\eta, ~t \in \Delta .
\end{align}
satisfy the inequality
\begin{align} \label{e31}
\| y_j-y_{j-1}\|_{\mathbf{C}_{t;\, \Psi}}\leq \frac{\epsilon}{L}(\lambda L)^j (\Psi(t)-\Psi(a))^{1-\rho}\phi(t),~j\in\N .
\end{align}
Using the inequation \eqref{e31}, we obatin
\begin{align*}
\sum_{j=1}^{\infty}\left\| y_j-y_{j-1}\right\|_{\mathbf{C}_{t;\Psi}} 
\leq \frac{\epsilon}{L} \left( \sum_{j=1}^{\infty}(\lambda L)^{j}\right) (\Psi(t)-\Psi(a))^{1-\rho} \,\phi(t) 
\end{align*}
Thus 
\begin{align}
\sum_{j=1}^{\infty}\left\| y_j-y_{j-1}\right\|_{\mathbf{C}_{t;\Psi}} 
&\leq \epsilon\, \left( \frac{\lambda }{1-\lambda L} \right) (\Psi(t)-\Psi(a))^{1-\rho}  \,\phi(t), ~t \in \Delta. 
\end{align}
Following  the  steps as in the proof of  the Theorem \ref{thm4.2} there exists $y\in \mathbf{C}_{{1-\rho};\, \Psi}(\Delta,\mathbb{R})$ such that $
\| y_n-y\|_{\mathbf{C}_{{1-\rho};\, \Psi}(\Delta,\mathbb{R})}\to 0   ~as ~n\to \infty.$ This $y$ is the solution of the problem \eqref{e11}-\eqref{e12} with 
$\mathbf{I}_{a^+}^{1-\rho,\Psi} y(a)=\mathbf{I}_{a^+}^{1-\rho,\Psi} y^{*}(a)$, and we have 
$$y=y_0+ \sum_{j=1}^{\infty}( y_j-y_{j-1}).$$
Further, for the solution $y^{*}$ of inequation and the solution $y$ of the equation \eqref{e11}, for any $t\in\Delta$, 
\begin{align*} 
|(\Psi(t)-\Psi(a))^{1-\rho}(y^{*}(t)-y(t))|
&=\left|(\Psi(t)-\Psi(a))^{1-\rho}\left[y_0(t)-\left(y_0(t)+\sum_{j=1}^{\infty}( y_j(t)-y_{j-1}(t))\right)\right]\right|\\
&\leq \sum_{j=1}^{\infty}\left|(\Psi(t)-\Psi(a))^{1-\rho}( y_j(t)-y_{j-1}(t))\right|\\
&\leq \sum_{j=1}^{\infty} \| y_j-y_{j-1}\|_{\mathbf{C}_{t, \Psi}}\\
&=\epsilon\, \left( \frac{\lambda }{1-\lambda L} \right) (\Psi(t)-\Psi(a))^{1-\rho}  \,\phi(t), ~ t \in \Delta. 
\end{align*}
Thus, we have
\begin{align*}
|(\Psi(t)-\Psi(a))^{1-\rho}(y^{*}(t)-y(t))|\leq \left(\frac{\lambda}{1-\lambda L}\,(\Psi(b)-\Psi(a))^{1-\rho}\right)\,\epsilon\phi(t), ~ t \in \Delta.
\end{align*}
This proves the equation \eqref{e11} is HUR stable.
\end{proof}
\begin{cor}\label{cor4.2}
Suppose that the function $f$ satisfies the assumptions of Theorem \ref{Thm4.3}. 
Then, the problem \eqref{e11} is generalized HUR stable. 
\end{cor}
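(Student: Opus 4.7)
The plan is to mirror the strategy used in the preceding corollary (which deduced generalized HU stability from Theorem \ref{thm4.2}) and observe that inequality \eqref{ine2} is precisely \eqref{ine3} with $\epsilon=1$. Thus any $y^{*}\in \mathbf{C}_{1-\rho;\,\Psi}(\Delta,\mathbb{R})$ satisfying \eqref{ine2} also satisfies the hypotheses of Theorem \ref{Thm4.3} for this particular value of $\epsilon$, and we may invoke that theorem directly.

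Applying Theorem \ref{Thm4.3} with $\epsilon=1$, we obtain a solution $y \in \mathbf{C}_{1-\rho;\,\Psi}(\Delta,\mathbb{R})$ of \eqref{e11} with $\mathbf{I}_{a^+}^{1-\rho;\,\Psi} y(a) = \mathbf{I}_{a^+}^{1-\rho;\,\Psi} y^{*}(a)$ and satisfying
$$|(\Psi(t)-\Psi(a))^{1-\rho}(y^{*}(t)-y(t))| \le \left(\frac{\lambda}{1-\lambda L}(\Psi(b)-\Psi(a))^{1-\rho}\right)\phi(t), \quad t \in \Delta.$$
Setting $C_{f,\phi} := \dfrac{\lambda}{1-\lambda L}(\Psi(b)-\Psi(a))^{1-\rho}>0$, this is exactly the inequality demanded by the definition of generalized HUR stability, and the proof concludes.

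There is no genuine obstacle in this route; the argument is a direct specialization of Theorem \ref{Thm4.3}. If one prefers a self-contained derivation, one repeats the successive-approximation construction of Theorem \ref{Thm4.3} verbatim, replacing the bound $|\sigma_{y^{*}}(t)| \le \epsilon \phi(t)$ by $|\sigma_{y^{*}}(t)| \le \phi(t)$ throughout. The induction estimate then becomes $\| y_j - y_{j-1}\|_{\mathbf{C}_{t;\,\Psi}} \le L^{-1}(\lambda L)^{j}(\Psi(t)-\Psi(a))^{1-\rho}\phi(t)$; the geometric series over $j$ converges by the hypothesis $\lambda L < 1$, the telescoping sum $y_0 + \sum_{j=1}^{\infty}(y_j - y_{j-1})$ converges in $\mathbf{C}_{1-\rho;\,\Psi}(\Delta,\mathbb{R})$ to a limit $y$, and the same passage to the limit as in Theorems \ref{thm4.2} and \ref{Thm4.3} identifies $y$ as a solution of \eqref{e11} with the required initial data and bound.
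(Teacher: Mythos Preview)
Your proposal is correct and follows essentially the same approach as the paper: set $\epsilon=1$ in Theorem~\ref{Thm4.3}, take $C_{f,\phi}=\dfrac{\lambda}{1-\lambda L}(\Psi(b)-\Psi(a))^{1-\rho}$, and read off the required inequality. The optional self-contained rederivation you sketch is also fine but unnecessary.
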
 
\begin{proof}
Set $\epsilon=1$ and $C_{f,\phi}=\left(\frac{\lambda}{1-\lambda L}\,(\Psi(b)-\Psi(a))^{1-\rho}\right) $ in the proof of Theorem \ref{Thm4.3}.
Then for each solution $y^{*}\in \mathbf{C}_{{1-\rho};\, \Psi}(\Delta,\mathbb{R})$ that satisfies 
the inequality 
$$
\left|^H \mathbf{D}^{\mu,\nu;\,\Psi}_{a^+} y^{*}(t)-f(t,y^{*}(t))\right|\leq \phi(t),~ t \in \Delta,
$$
there exists a solution $y$ of equation \eqref{e11} in $\mathbf{C}_{{1-\rho};\, \Psi}(\Delta,\mathbb{R})$ with $\mathbf{I}_{a^+}^{{1-\rho};\, \Psi}y^{*}(a)=\mathbf{I}_{a^+}^{{1-\rho};\, \Psi}y(a)$ such that 
$$
|(\Psi(t)-\Psi(a))^{1-\rho}(y^{*}(t)-y(t))| \leq C_{f,\phi}\, \phi(t),~t \in \Delta. $$
Hence the fractional differential equation \eqref{e11} is generalized HUR stable.
\end{proof}
\section{$\epsilon-$Approximate solutions to Hilfer FDE}
\begin{definition}
A function $y^{*} \in {\mathbf{C}_{{1-\rho};\, \Psi}(\Delta,\mathbb{R})}$ that satisfy the fractional differential inequality
$$\left|^H \mathbf{D}^{\mu,\nu;\,\Psi}_{a^+}y^{*}(t)-f(t,y^{*}(t))\right| \leq \epsilon,~~ t \in \Delta $$ 
is called an $\epsilon$-approximate solution of $\Psi$--Hilfer FDE \eqref{e11}. 
\end{definition}
\begin{theorem}[\cite{Sousa1}]\label{lem1}
Let $\mathfrak{u},$ $\mathbf{v}$ be two integrable, non negative functions and $\mathbf{g}$ be a continuous, nonnegative, nondecreasing function with domain $\Delta.$  
If
\begin{equation*}
\mathfrak{u}\left( t\right) \leq \mathbf{v}\left( t\right) + \mathbf{g}\left( t\right) \int_{a}^{t}\mathcal{L}_{\Psi}^{\mu}(\tau,s)\mathfrak{u}\left( \tau \right) d\tau,
\end{equation*}
then
\begin{equation}\label{jose}
\mathfrak{u}\left( t\right) \leq \mathbf{v}\left( t\right) +\int_{a}^{t}\overset{\infty }{%
\underset{k=1}{\sum }}\frac{\left[ \mathbf{g}\left( t\right) \Gamma \left( \mu
\right) \right] ^{k}}{\Gamma \left( \mu k\right) }\mathcal{L}_{\Psi}^{\mu k}(t,\tau)\mathbf{v}\left( \tau \right) d\tau,
\end{equation}
$\forall~ t\in \Delta$.
\end{theorem}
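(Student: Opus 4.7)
The approach is the standard iteration argument for Gronwall-type inequalities, adapted to the $\Psi$-kernel. I would introduce the linear operator $B$ on nonnegative integrable functions defined by $B\phi(t) = \mathbf{g}(t)\int_a^t \mathcal{L}_\Psi^\mu(t,\tau)\phi(\tau)\,d\tau$, so that the hypothesis reads $\mathfrak{u} \leq \mathbf{v} + B\mathfrak{u}$. Since $B$ is monotone (it preserves pointwise inequalities between nonnegative functions), iterating $n$ times yields $\mathfrak{u}(t) \leq \sum_{k=0}^{n-1} B^k \mathbf{v}(t) + B^n\mathfrak{u}(t)$. The theorem will follow by (i) computing an explicit closed-form upper bound for each $B^k \mathbf{v}$ matching the $k$-th term of the claimed series, and (ii) showing that the tail $B^n\mathfrak{u}(t)$ vanishes as $n\to\infty$.

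The core step is proving by induction on $k\geq 1$ that $B^k \mathbf{v}(t) \leq \int_a^t \frac{[\mathbf{g}(t)\Gamma(\mu)]^k}{\Gamma(\mu k)} \mathcal{L}_\Psi^{\mu k}(t,\tau)\mathbf{v}(\tau)\,d\tau$. The base case $k=1$ is the definition of $B$. For the inductive step I would substitute the $k$-th bound into $B^{k+1}\mathbf{v}(t) = B(B^k\mathbf{v})(t)$, use the assumption that $\mathbf{g}$ is nondecreasing to replace $\mathbf{g}(s)^k$ by $\mathbf{g}(t)^k$ for $s\in[a,t]$, apply Fubini to exchange the order of integration, and then evaluate the inner kernel integral
\begin{equation*}
\int_\tau^t \mathcal{L}_\Psi^\mu(t,s)\,\mathcal{L}_\Psi^{\mu k}(s,\tau)\,ds = \Psi'(\tau)\int_\tau^t \Psi'(s)(\Psi(t)-\Psi(s))^{\mu-1}(\Psi(s)-\Psi(\tau))^{\mu k -1}\,ds.
\end{equation*}
After the substitution $u = \Psi(s)$ and the rescaling $u = \Psi(\tau) + (\Psi(t)-\Psi(\tau))v$, this reduces to $\Psi'(\tau)(\Psi(t)-\Psi(\tau))^{\mu(k+1)-1}B(\mu k,\mu)$, and the Beta--Gamma identity $B(\mu k,\mu) = \Gamma(\mu k)\Gamma(\mu)/\Gamma(\mu(k+1))$ produces precisely the factor needed to propagate the induction hypothesis to index $k+1$.

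For the remainder, the same iterative bound applied with $\mathfrak{u}$ in place of $\mathbf{v}$ gives $B^n\mathfrak{u}(t) \leq \frac{[\mathbf{g}(t)\Gamma(\mu)]^n}{\Gamma(\mu n)}\int_a^t \mathcal{L}_\Psi^{\mu n}(t,\tau)\mathfrak{u}(\tau)\,d\tau$. On the compact interval $\Delta = [a,b]$ the integral factor is uniformly bounded in $n$, while $[\mathbf{g}(t)\Gamma(\mu)]^n/\Gamma(\mu n)$ is the general term of a convergent two-parameter Mittag--Leffler-type series and therefore tends to $0$. Passing to the limit in the iterated inequality and summing the bounds for $B^k \mathbf{v}$ then gives the stated inequality \eqref{jose}. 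I expect the Beta-integral computation in the inductive step to be the principal obstacle: it is not deep, but the $\Psi'$-factors and endpoints must be tracked carefully through the substitution, and the monotonicity hypothesis on $\mathbf{g}$ is essential for the iteration to telescope into the clean factor $[\mathbf{g}(t)\Gamma(\mu)]^k$ rather than an unwieldy product $\prod_{j} \mathbf{g}(s_j)$.
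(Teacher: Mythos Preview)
The paper does not supply its own proof of this statement: Theorem~\ref{lem1} is quoted verbatim from \cite{Sousa1} (the generalized Gronwall inequality of Sousa and Oliveira) and is used as a black box in the subsequent theorem on $\epsilon$-approximate solutions. Consequently there is nothing in the paper to compare your argument against.

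That said, your outline is the standard and correct proof of this class of Gronwall inequalities. The iteration $\mathfrak{u}\leq \sum_{k=0}^{n-1}B^k\mathbf{v}+B^n\mathfrak{u}$, the inductive evaluation of the iterated kernel via the Beta identity $\int_\tau^t \mathcal{L}_\Psi^{\mu}(t,s)\mathcal{L}_\Psi^{\mu k}(s,\tau)\,ds = \dfrac{\Gamma(\mu)\Gamma(\mu k)}{\Gamma(\mu(k+1))}\,\mathcal{L}_\Psi^{\mu(k+1)}(t,\tau)$, and the use of the nondecreasing hypothesis on $\mathbf{g}$ to replace $\prod_j \mathbf{g}(s_j)$ by $\mathbf{g}(t)^k$ are exactly the ingredients in the original source. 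One small imprecision: in the tail estimate you assert that $\int_a^t \mathcal{L}_\Psi^{\mu n}(t,\tau)\mathfrak{u}(\tau)\,d\tau$ is ``uniformly bounded in $n$''. In fact this integral may grow like $(\Psi(b)-\Psi(a))^{\mu n}$; what you need (and what suffices) is that its product with $[\mathbf{g}(t)\Gamma(\mu)]^n/\Gamma(\mu n)$ tends to zero, which follows because $C^{n}/\Gamma(\mu n)$ is the general term of a convergent Mittag--Leffler series. With that adjustment the argument is complete.
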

\begin{theorem}
Let $f:(a,b] \times \mathbb{R} \to \mathbb{R}$ be a function which satisfies Lipschitz condition
\begin{align*}
 |f(t,y_1)-f(t,y_2)|\leq L |y_1-y_2|
\end{align*}
for each $t\in (a,b]$ and all $y_1,y_2\in \mathbb{R} $, where $L>0$ is constant. Let ${y_i}^{*} \in \mathbf{C}_{{1-\rho};\, \Psi}(\Delta,\mathbb{R}), (i = 1, 2)$ be an $\epsilon_i-$ approximte solutions of FDE \eqref{e11} corresponding to $\mathbf{I}_{a^+}^{1-\rho;\, \Psi}{y_i}^{*}(a)=y_a^{(i)}\in\mathbb{R},$ respectively. Then,  
\begin{small}
\begin{align} \label{dpdn}
\| {y_1}^{*}-{y_2}^{*}\|_{\mathbf{C}_{{1-\rho};\, \Psi}(\Delta,\mathbb{R})}
&\leq (\epsilon_1+\epsilon_2)\left(\frac{(\Psi(b)-\Psi(a))^{\mu-\rho+1}}{\Gamma(\mu+1)}+\sum_{k=1}^{\infty}\frac{L^k}{\Gamma((k+1)\mu-\rho+1)}(\Psi(b)-\Psi(a))^{(k+1)\mu}\right) \nonumber \\
  & \qquad+ |y_a^{(1)}-y_a^{(2)}|\left( \frac{1}{\Gamma(\rho)}+ \sum_{k=1}^{\infty} \frac{L^k}{\Gamma(\rho+k\mu)}
  (\Psi(b)-\Psi(a))^{k\mu}\right).
\end{align}
\end{small}
\end{theorem}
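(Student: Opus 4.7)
\medskip
\noindent\emph{Proof proposal.} The plan is to reduce the inequality to an integral form, linearise it via the Lipschitz bound and the $\epsilon_i$-approximation hypothesis, and then apply the generalised Gronwall inequality of Theorem \ref{lem1} so that the solution in closed form of the resulting linear Volterra-type inequality yields exactly the series on the right-hand side of \eqref{dpdn}.

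First, mimicking the start of the proof of Theorem \ref{thm4.2}, since each $y_i^{*}$ is an $\epsilon_i$-approximate solution, there exist functions $\sigma_{y_i^{*}}\in \mathbf{C}_{1-\rho;\Psi}(\Delta,\mathbb{R})$ with $|\sigma_{y_i^{*}}(t)|\le \epsilon_i$ such that $^H\mathbf{D}^{\mu,\nu;\Psi}_{a^+}y_i^{*}(t)=f(t,y_i^{*}(t))+\sigma_{y_i^{*}}(t)$. Applying $\mathbf{I}_{a^+}^{\mu;\Psi}$ to both sides and invoking Lemma \ref{lem2.3}(i) together with the prescribed initial data $\mathbf{I}_{a^+}^{1-\rho;\Psi}y_i^{*}(a)=y_a^{(i)}$, one obtains the equivalent integral form
\begin{equation*}
y_i^{*}(t)=\Omega_{\Psi}^{\rho}(t,a)\,y_a^{(i)}+\mathbf{I}_{a^+}^{\mu;\Psi}f(t,y_i^{*}(t))+\mathbf{I}_{a^+}^{\mu;\Psi}\sigma_{y_i^{*}}(t),\qquad i=1,2.
\end{equation*}

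Subtracting the two equations, using the triangle inequality, the Lipschitz bound on $f$, and the pointwise estimates $|\sigma_{y_i^{*}}|\le \epsilon_i$, and evaluating $\mathbf{I}_{a^+}^{\mu;\Psi}(1)(t)=(\Psi(t)-\Psi(a))^{\mu}/\Gamma(\mu+1)$ via Lemma (ii), I expect to arrive at
\begin{equation*}
|y_1^{*}(t)-y_2^{*}(t)|\le \mathbf{v}(t)+\frac{L}{\Gamma(\mu)}\int_{a}^{t}\mathcal{L}_{\Psi}^{\mu}(t,\eta)\,|y_1^{*}(\eta)-y_2^{*}(\eta)|\,d\eta,
\end{equation*}
where
\begin{equation*}
\mathbf{v}(t)=\frac{(\Psi(t)-\Psi(a))^{\rho-1}}{\Gamma(\rho)}\,|y_a^{(1)}-y_a^{(2)}|+(\epsilon_1+\epsilon_2)\frac{(\Psi(t)-\Psi(a))^{\mu}}{\Gamma(\mu+1)}.
\end{equation*}

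Next I apply Theorem \ref{lem1} with $\mathfrak{u}=|y_1^{*}-y_2^{*}|$, $\mathbf{g}(t)\equiv L/\Gamma(\mu)$, so that the factor $[\mathbf{g}(t)\Gamma(\mu)]^{k}=L^{k}$ cleans up, and \eqref{jose} becomes
\begin{equation*}
|y_1^{*}(t)-y_2^{*}(t)|\le \mathbf{v}(t)+\sum_{k=1}^{\infty}L^{k}\,\mathbf{I}_{a^+}^{k\mu;\Psi}\mathbf{v}(t).
\end{equation*}
Because $\mathbf{v}$ is a linear combination of powers $(\Psi(t)-\Psi(a))^{\rho-1}$ and $(\Psi(t)-\Psi(a))^{\mu}$, Lemma (ii) lets me evaluate each $\mathbf{I}_{a^+}^{k\mu;\Psi}\mathbf{v}$ in closed form; the $\Gamma$-cancellations replace $\Gamma(\rho)$ and $\Gamma(\mu+1)$ in $\mathbf{v}$ by $\Gamma(k\mu+\rho)$ and $\Gamma((k+1)\mu+1)$, and raise the power of $(\Psi(t)-\Psi(a))$ by $k\mu$. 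Multiplying the resulting pointwise bound by $(\Psi(t)-\Psi(a))^{1-\rho}$ and maximising over $t\in\Delta$ produces the sum of two series matching the two bracketed sums in \eqref{dpdn}.

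The main obstacle I anticipate is purely bookkeeping: tracking the exponents and $\Gamma$-arguments through the $\mathbf{I}_{a^+}^{k\mu;\Psi}$ operations, verifying that the $k=0$ terms coming from $\mathbf{v}$ itself combine cleanly with the $k\ge 1$ Gronwall series to produce the precise coefficients $1/\Gamma((k+1)\mu-\rho+1)$ and $1/\Gamma(\rho+k\mu)$ in the stated bound, and ensuring the monotonicity step $(\Psi(t)-\Psi(a))\le (\Psi(b)-\Psi(a))$ used at the end is legitimate for every summand (which it is, since $\Psi$ is increasing and each power $(\Psi(t)-\Psi(a))^{k\mu}$ or $(\Psi(t)-\Psi(a))^{(k+1)\mu}$ has a non-negative exponent).
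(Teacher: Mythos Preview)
Your proposal is correct and follows essentially the same route as the paper: derive the integral representation for each $y_i^{*}$, subtract to obtain a Volterra-type inequality with exactly the $\mathbf{v}(t)$ you wrote, apply the generalized Gronwall inequality (Theorem~\ref{lem1}) with $\mathbf{g}\equiv L/\Gamma(\mu)$, evaluate each $\mathbf{I}_{a^+}^{k\mu;\Psi}\mathbf{v}$ via the power rule, and then pass to the weighted norm by multiplying by $(\Psi(t)-\Psi(a))^{1-\rho}$ and bounding $t$ by $b$. The only cosmetic difference is that the paper integrates the defining inequality $|{}^{H}\mathbf{D}^{\mu,\nu;\Psi}_{a^+}y_i^{*}-f|\le \epsilon_i$ directly rather than first introducing the auxiliary functions $\sigma_{y_i^{*}}$, but both paths yield the identical pointwise estimate before Gronwall is invoked.
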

\begin{proof}
Let ${y_i}^{*} \in \mathbf{C}_{{1-\rho};\, \Psi}(\Delta,\mathbb{R}), (i = 1, 2)$ be  an $\epsilon_i-$ approximate solution of FDE \eqref{e11} that satisfy the initial condition $\mathbf{I}_{a^+}^{1-\rho;\, \Psi} {y_i}^{*}(a)=y_a^{(i)}\in\mathbb{R}$. Then, 
\begin{align}\label{ineq5.1}
\left|^H \mathbf{D}^{\mu,\nu;\,\Psi}_{a^+} {y_i}^{*}(t)-f(t,{y_i}^{*}(t))\right|\leq \epsilon_i,~t \in \Delta.
\end{align}
Operating $\mathbf{I}_{a^+}^{\mu;\, \Psi} $ on both the sides  of the above inequation and using the Lemma \ref{lem2.3}, we get
\begin{align*}
\mathbf{I}_{a^+}^{\mu;\, \Psi} {\epsilon_i}
&\geq \mathbf{I}_{a^+}^{\mu;\, \Psi} \left|^H \mathbf{D}^{\mu,\nu;\,\Psi}_{a^+}{y_i}^{*}(t)-f(t,{y_i}^{*}(t))\right|\\
&\geq \left|\frac{1}{\Gamma (\mu)}\int_{a}^{t}\mathcal{L}_{\Psi}^{\mu}(t,\eta) \left(^H \mathbf{D}^{\mu,\nu;\,\Psi}_{a^+}{y_i}^{*}(\eta)-f(t,{y_i}^{*}(\eta))\right)\, d\eta\right|\\
&= \left|\mathbf{I}_{a^+}^{\mu;\, \Psi}\, {^H \mathbf{D}^{\mu,\nu;\,\Psi}_{a^+}} {y_i}^{*}(t)-\mathbf{I}_{a^+}^{\mu;\, \Psi} f(t,{y_i}^{*}(t))\right|\\
&= \left|{y_i}^{*}(t)-\mathbf{I}_{a_+}^{1-\rho; \Psi}{y_i}^{*}(a)\, \Omega_{\Psi}^{\rho}(t,a)- \mathbf{I}_{a^+}^{\mu;\, \Psi} f(t,{y_i}^{*}(t))\right|.
\end{align*}
Therefore,
\begin{align}\label{ineq5.2}
\frac{\epsilon_i}{\Gamma(\mu+1)}\,(\Psi(t)-\Psi(a))^\mu
\geq \left|{y_i}^{*}(t)-y_a^{(i)}\, \Omega_{\Psi}^{\rho}(t,a)-\mathbf{I}_{a^+}^{\mu;\, \Psi} f(t,{y_i}^{*}(t)) \right|,~ i=1,2
\end{align}
Using the following inequalities
$$ |x-y|\leq |x|+|y| ~\mbox{and}~ |x|-|y|\leq |x-y|,~x, y \in \mathbb{R},$$
from the inequation \eqref{ineq5.2}, for any $t\in \Delta,$ we have
\begin{align*}
&\frac{(\epsilon_1+\epsilon_2)}{\Gamma(\mu+1)}\,(\Psi(t)-\Psi(a))^\mu\\
&\geq \left|{y_1}^{*}(t)-y_a^{(1)}\, \Omega_{\Psi}^{\rho}(t,a)-\mathbf{I}_{a^+}^{\mu;\, \Psi} f(t,{y_1}^{*}(t))\right| \\
&+ \left|{y_2}^{*}(t)-y_a^{(2)}\, \Omega_{\Psi}^{\rho}(t,a)-\mathbf{I}_{a^+}^{\mu;\, \Psi} f(t,{y_2}^{*}(t))\right|\\
&\geq \left|\left({y_1}^{*}(t)-y_a^{(1)}\, \Omega_{\Psi}^{\rho}(t,a)-\mathbf{I}_{a^+}^{\mu;\, \Psi} f(t,{y_1}^{*}(t))\right)\right.\\
&\left. \qquad  - \left({y_2}^{*}(t)-y_a^{(2)}\, \Omega_{\Psi}^{\rho}(t,a)-\mathbf{I}_{a^+}^{\mu;\, \Psi} f(t,{y_2}^{*}(t))\right)\right|\\
&=\left|({y_1}^{*}(t)-{y_2}^{*}(t))-(y_a^{(1)}-y_a^{(2)}) \Omega_{\Psi}^{\rho}(t,a) - \mathbf{I}_{a^+}^\mu [f(t,{y_1}^{*}(t))-f(t,{y_2}^{*}(t))]\right|\\
&\geq |({y_1}^{*}(t)-{y_2}^{*}(t))|-\left|(y_a^{(1)}-y_a^{(2)}) \Omega_{\Psi}^{\rho}(t,a)\right| \\
& \qquad- \left|\mathbf{I}_{a^+}^\mu ~\{f(t,{y_1}^{*}(t))-f(t,{y_2}^{*}(t))\}\right|
\end{align*}
Therefore,
\begin{align*}
 |({y_1}^{*}(t)-{y_2}^{*}(t))| 
 & \leq \frac{(\epsilon_1+\epsilon_2)}{\Gamma(\mu+1)}\,(\Psi(t)-\Psi(a))^\mu + \left|(y_a^{(1)}-y_a^{(2)}) \Omega_{\Psi}^{\rho}(t,a)\right|\\
& \qquad + \left| \mathbf{I}_{a^+}^{\mu;\, \Psi} (f(t,{y_1}^{*}(t))-f(t,{y_2}^{*}(t)))\right| \\
& \leq \frac{(\epsilon_1+\epsilon_2)}{\Gamma(\mu+1)}\,(\Psi(t)-\Psi(a))^\mu + \left| (y_a^{(1)}-y_a^{(2)}) \Omega_{\Psi}^{\rho}(t,a)\right| \\
& + \frac{L}{\Gamma(\mu)}\int_{a}^{t}\mathcal{L}_{\Psi}^{\mu}(t,\eta)\,|{y_1}^{*}(\eta))-{y_2}^{*}(\eta)|\, d\eta
\end{align*}
Applaying Lemma \ref{lem1} with 
\begin{align*}
\mathfrak{u}(t)&= |{y_1}^{*}{(t)}-{y_2}^{*}{(t)}|,\\
\mathbf{v}(t)&= \frac{(\epsilon_1+\epsilon_2)}{\Gamma(\mu+1)}\,(\Psi(t)-\Psi(a))^\mu +\left|  (y_a^{(1)}-y_a^{(2)}) \Omega_{\Psi}^{\rho}(t,a)\right| ,\\
\mathbf{g}(t)&=\frac{L}{\Gamma(\mu)},
\end{align*}
we obtain
\begin{align*}
& |{y_1}^{*}{(t)}-{y_2}^{*}{(t)}|\\
 &\leq \frac{(\epsilon_1+\epsilon_2)}{\Gamma(\mu+1)}\,(\Psi(t)-\Psi(a))^\mu + \left|  (y_a^{(1)}-y_a^{(2)}) \Omega_{\Psi}^{\rho}(t,a)\right|\\
 &\quad +\int_{a}^{t}\sum_{k=1}^{\infty}\frac{L^k}{\Gamma(k\mu)}\mathcal{L}_{\Psi}^{k \mu}(t,\eta) \left(\frac{(\epsilon_1+\epsilon_2)}{\Gamma(\mu+1)}\,(\Psi(\eta)-\Psi(a))^\mu + \left|  (y_a^{(1)}-y_a^{(2)}) \Omega_{\Psi}^{\rho}(t,a)\right|\right) \, d\eta \\
 &= \frac{(\epsilon_1+\epsilon_2)}{\Gamma(\mu+1)}\,(\Psi(t)-\Psi(a))^\mu + \left|  (y_a^{(1)}-y_a^{(2)}) \Omega_{\Psi}^{\rho}(t,a)\right|\\
  &\quad +\frac{(\epsilon_1+\epsilon_2)}{\Gamma(\mu+1)}\sum_{k=1}^{\infty} L^k\,\mathbf{I}_{a^+}^{k\mu;\, \Psi}(\Psi(t)-\Psi(a))^\mu+ \frac{|y_a^{(1)}-y_a^{(2)}|}{\Gamma(\rho)}\sum_{k=1}^{\infty} L^k\, \mathbf{I}_{a^+}^{k\mu;\, \Psi}(\Psi(t)-\Psi(a))^{\rho-1} \\
 &= \frac{(\epsilon_1+\epsilon_2)}{\Gamma(\mu+1)}\,(\Psi(t)-\Psi(a))^\mu + \left|  (y_a^{(1)}-y_a^{(2)}) \Omega_{\Psi}^{\rho}(t,a)\right|\\
   &\quad +\frac{(\epsilon_1+\epsilon_2)}{\Gamma(\mu+1)}\,
   \sum_{k=1}^{\infty} L^k\,\frac{\Gamma(\mu+1)}{\Gamma((k+1)\mu+1)}(\Psi(t)-\Psi(a))^{(k+1)\mu}
  \\
     &\quad 
      +\frac{|y_a^{(1)}-y_a^{(2)}|}{\Gamma(\rho)}\sum_{k=1}^{\infty} \frac{L^k\,\Gamma(\rho)}{\Gamma(\rho+k\mu)}(\Psi(t)-\Psi(a))^{k\mu+\rho-1}\\
  &= (\epsilon_1+\epsilon_2)\left(\frac{(\Psi(t)-\Psi(a))^\mu}{\Gamma(\mu+1)}+\sum_{k=1}^{\infty}\frac{L^k}{\Gamma((k+1)\mu+1)}(\Psi(t)-\Psi(a))^{(k+1)\mu}\right) \\
 & + |y_a^{(1)}-y_a^{(2)}|\left( \frac{(\Psi(t)-\Psi(a))^{\rho-1}}{\Gamma(\rho)}+ \sum_{k=1}^{\infty} \frac{L^k}{\Gamma(\rho+ k\mu)}(\Psi(t)-\Psi(a))^{k\mu+\rho-1}\right)
\end{align*}
Thus for every $t\in\Delta,$ we have
\begin{align*}
 & (\Psi(t)-\Psi(a))^{1-\rho}|({y_1}^{*}(t)-{y_2}^{*}(t))| \\
&\leq(\epsilon_1+\epsilon_2)\left(\frac{(\Psi(t)-\Psi(a))^{\mu-\rho+1}}{\Gamma(\mu+1)}+\sum_{k=1}^{\infty}\frac{L^k}{\Gamma((k+1)\mu-\rho+1)}(\Psi(t)-\Psi(a))^{(k+1)\mu}\right) \\
 & + |y_a^{(1)}-y_a^{(2)}|\left( \frac{1}{\Gamma(\rho)}+ \sum_{k=1}^{\infty} \frac{L^k}{\Gamma(\rho+k\mu)}(\Psi(t)-\Psi(a))^{k\mu}\right)\\
 &\leq (\epsilon_1+\epsilon_2)\left(\frac{(\Psi(b)-\Psi(a))^{\mu-\rho+1}}{\Gamma(\mu+1)}+\sum_{k=1}^{\infty}\frac{L^k}{\Gamma((k+1)\mu-\rho+1)}(\Psi(b)-\Psi(a))^{(k+1)\mu}\right) \\
  & + |y_a^{(1)}-y_a^{(2)}|\left( \frac{1}{\Gamma(\rho)}+ \sum_{k=1}^{\infty} \frac{L^k}{\Gamma(\rho+k\mu)}(\Psi(b)-\Psi(a))^{k\mu}\right)\\
\end{align*}
Therefore,
\begin{align*}
&\| {y_1}^{*}-{y_2}^{*}\|_{\mathbf{C}_{{1-\rho};\, \Psi}(\Delta,\mathbb{R})}\\
&\leq (\epsilon_1+\epsilon_2)\left(\frac{(\Psi(b)-\Psi(a))^{\mu-\rho+1}}{\Gamma(\mu+1)}+\sum_{k=1}^{\infty}\frac{L^k}{\Gamma((k+1)\mu-\rho+1)}(\Psi(b)-\Psi(a))^{(k+1)\mu}\right) \\
  & + |y_a^{(1)}-y_a^{(2)}|\left( \frac{1}{\Gamma(\rho)}+ \sum_{k=1}^{\infty} \frac{L^k}{\Gamma(\rho+k\mu)}(\Psi(b)-\Psi(a))^{k\mu}\right)
\end{align*}
which is the desired inequality.
\end{proof}
\begin{rem} \label{rem1}
If  $\epsilon_1=\epsilon_2=0$ in the inequality \eqref{ineq5.1} then ${y_1}^{*}$ and ${y_2}^{*}$ are the solutions of Cauchy problem \eqref{e11}--\eqref{e12} in the space ${\mathbf{C}_{{1-\rho};\, \Psi}[a,b]}$. Further, for  $\epsilon_1=\epsilon_2=0$ the inequality takes the form
\begin{align*} 
\| {y_1}^{*}-{y_2}^{*}\|_{\mathbf{C}_{{1-\rho};\, \Psi}(\Delta,\mathbb{R})}
&\leq |y_a^{(1)}-y_a^{(2)}|\left( \frac{1}{\Gamma(\rho)}+ \sum_{k=1}^{\infty} \frac{L^k}{\Gamma(\rho+k\mu)}
  (\Psi(b)-\Psi(a))^{k\mu}\right),
\end{align*}
 which provides the information  regarding continuous dependence of the solution of the problem \eqref{e11}-\eqref{e12} on initial condition. In addition,  if $y_a^{(1)} = y_a^{(2)}$ we have $\| {y_1}^{*}-{y_2}^{*}\|_{\mathbf{C}_{{1-\rho};\, \Psi}(\Delta,\mathbb{R})}=0$, which gives the uniqueness of solution of the problem \eqref{e11}-\eqref{e12}.
\end{rem}
\section{Examples}
\begin{ex} 
Consider the $\Psi$--Hilfer FDE
\begin{align}
^H \mathbf{D}_{0^+}^{\frac{1}{2},\frac{1}{2};\, \Psi} y(t)&=4 y(t)  \label{ex1} , ~t \in J=[0,1],\\
\mathbf{I}_{0^+}^{\frac{1}{4}}y(0)&=2. \label{ex12} 
\end{align}
\end{ex}
comparing with the Cauchy problem \eqref{e11}-\eqref{e12}, we have
$$\mu=\frac{1}{2},\, \nu=\frac{1}{2},\, \rho= \mu+\nu-\mu\nu=\frac{3}{4},\,\, {y}^{*}_0= \mathbf{I}_{0^+}^{1-\rho;\, \Psi}{y}^{*}(0)=2,\,\mbox{and} ~f(t,y(t))=4y(t).$$
Clearly, $f$ satisfies Lipschitz condition with Lipschitz constant $L=4$. By Theorem \ref{th3.1} the initial value problem \eqref{ex1}-\eqref{ex12} has a unique solution. Further, the Theorem \ref{thm4.2} guarantee that the equation \eqref{ex1} is HU stable.
Indeed, we prove that for given $\epsilon>0$ and the solution ${y}^{*}$ of the inequality
\begin{align*}
\left|^H \mathbf{D}_{0^+}^{\frac{1}{2},\frac{1}{2};\, \Psi}{y}^{*}(t)-4 {y}^{*}(t)\right|\leq \epsilon,~t\in [0,1],
\end{align*} 
we can find a constant $C$ and solution $y$ of the given equation \eqref{ex1} such that
$$\|{y}^{*}-y\|_{\mathbf{C}_{1-\rho;\, \Psi}} \leq C \, \epsilon.$$
For example, take $\epsilon=8$ and consider the inequality
\begin{align}\label{61}
\left|^H \mathbf{D}_{0^+}^{\frac{1}{2},\frac{1}{2};\, \Psi}{y}^{*}(t)-4 {y}^{*}(t)\right|\leq 8,~t\in [0,1].
\end{align} 
Note that the function $\tilde{{y}^{*}}(t)=2 \frac{(\Psi(t)-\Psi(0))^{-\frac{1}{4}}}{\Gamma{(\frac{3}{4})}}\, $ satisfies the inequality \eqref{61}. Further,
$$
^H \mathbf{D}_{0^+}^{\frac{1}{2},\frac{1}{2};\, \Psi} {y_1}^{*}(t)=0,
$$
which shows $\tilde{{y}^{*}}$ is not the solution of  the Cauchy problem (\ref{ex1}) - (\ref{ex12}). Next, as discussed in the proof of Theorem \ref{thm4.2}, we define the sequence of successive approximations to the solution of (\ref{ex1}) as follows:
\begin{align*}
y_0(t)&= \tilde{{y}^{*}}(t)=2 \frac{(\Psi(t)-\Psi(0))^{-\frac{1}{4}}}{\Gamma{(\frac{3}{4})}}\\
y_n(t)&= \Omega_{\Psi}^{\rho}(t,a)\, \mathbf{I}_{a^+}^{{1-\rho};\, \Psi}{y}^{*}(a)+\frac{1}{\Gamma(\mu)}\int_{a}^{t}\mathcal{L}_{\Psi}^{\mu}(t,\eta)\, f(\eta, y_{n-1}(\eta))\, d\eta\\
&=\frac{(\Psi(t)-\Psi(0))^{-\frac{1}{4}}}{\Gamma(\frac{3}{4})}\,+\frac{4}{\Gamma(\frac{1}{2})}\int_{0}^{t}\mathcal{L}_{\Psi}^{\frac{1}{2}}(t,\eta) \,y_{n-1}(\eta))\, d\eta, ~t \in J, n \in \mathbb{N}. 
\end{align*}
Then,
\begin{align*}
y_1(t)&= 2 \frac{(\Psi(t)-\Psi(0))^{-\frac{1}{4}}}{\Gamma(\frac{3}{4})}\,+8\frac{(\Psi(t)-\Psi(0))^{\frac{1}{4}}}{\Gamma(\frac{5}{4})}\,\\
y_2(t)&= 2(\Psi(t)-\Psi(0))^{-\frac{1}{4}}\left[\frac{1}{\Gamma(\frac{3}{4})}+4 \frac{(\Psi(t)-\Psi(0))^{\frac{1}{2}}}{\Gamma(\frac{5}{4})}\,+ \frac{16}{\Gamma(\frac{7}{4})}\,(\Psi(t)-\Psi(0)) \right].
\end{align*}
In general, we have
\begin{align*}
y_n(t)&= 2(\Psi(t)-\Psi(0))^{-\frac{1}{4}} \sum_{j=0}^{n}\frac{(4(\Psi(t)-\Psi(0))^{\frac{1}{2}})^j}{\Gamma({j\frac{1}{2}}+\frac{3}{4})}, n \in \mathbb{N}. 
\end{align*}
The exact solution of the initial value problem \eqref{ex1}-\eqref{ex12} is given by
\begin{align}\label{e6.2}
y(t)&=\lim_{n\to\infty}{y_n(t)} \nonumber \\
& = \lim_{n\to \infty}2(\Psi(t)-\Psi(0))^{-\frac{1}{4}} \sum_{j=0}^{n}\frac{(4(\Psi(t)-\Psi(0))^{\frac{1}{2}})^j}{\Gamma({j\frac{1}{2}}+\frac{3}{4})} \nonumber \\
&= 2(\Psi(t)-\Psi(0))^{-\frac{1}{4}}\, E_{\frac{1}{2},\frac{3}{4}}(4 (\Psi(t)-\Psi(0))^{\frac{1}{2}}) 
\end{align}
Therefore
\begin{align*}
\|{y}^{*}-y\|_{\mathbf{C}_{1-\rho;\, \Psi}}&= \max_{t\in[0,1]} \left|(\Psi(t)-\Psi(0))^{1-\rho} ({y}^{*}(t)-y(t))\right|\\
&= \max_{t\in[0,1]}\left|(\Psi(t)-\Psi(0))^{\frac{1}{4}} (y(t)-x(t))\right|\\
&\leq \max_{t\in[0,1]} \left|{y}^{*}(t)-y(t)\right|\\
&\leq \max_{t\in[0,1]} \left|2(\Psi(t)-\Psi(0))^{-\frac{1}{4}}\, E_{\frac{1}{2},\frac{3}{4}}(4 t^{\frac{1}{2}})-\frac{2}{\Gamma(\frac{3}{4})}\,(\Psi(t)-\Psi(0))^{-\frac{1}{4}}\right|\\
&\leq \max_{t\in[0,1]} \left|E_{\frac{1}{2},\frac{3}{4}}(4 (\Psi(t)-\Psi(0))^{\frac{1}{2}})-\frac{2}{\Gamma(\frac{3}{4})}\right|\\
&\leq \left|E_{\frac{1}{2},\frac{3}{4}}(4 (\Psi(1)-\Psi(0))^{\frac{1}{2}})-\frac{2}{\Gamma(\frac{3}{4})}\right|\\
&=  C_{f} \,\epsilon,
\end{align*}
where $C_{f} = \frac{1}{8}\, \left|E_{\frac{1}{2},\frac{3}{4}}(4 (\Psi(1)-\Psi(0))^{\frac{1}{2}})-\frac{2}{\Gamma(\frac{3}{4})}\right|.$
On the similar line, for each $\epsilon>0$ and  for each solution
${y}^{*}\in \mathbf{C}_{{1-\rho};\, \Psi}[a,b]$ of the inequation \eqref{ine1}, one can find by method of successive approximation  a solution  $y \in \mathbf{C}_{{1-\rho};\, \Psi}[a,b]$ of \eqref{ex1} that satisfies the inequality
$$\|{y}^{*}-y\|_{\mathbf{C}_{{1-\rho};\, \Psi}[a,b]}\leq C_{f} \,\epsilon. $$



\begin{thebibliography}{99}

 \bibitem{Kilbas} A. A. Kilbas, H. M. Srivastava, J. J. Trujillo,
 Theory and applications of fractional differential equations, 
 Elsevier. Science, B.V., Amsterdam 2006.
 
 \bibitem{Hilfer} R. Hilfer, 
 Applications of fractional calculus in Physics, 
 World Scientific, Singapore, 2000.
 
 \bibitem{Furati} K. M. Furati, M. D. Kassim, N. E. Tatar, 
 Existence and uniqueness for a problem involving Hilfer fractional derivative,
  Comp. Math. Appl., (64)(2012), 1616--1626.
 
 \bibitem{Furati1}K. M. Furati, M. D. Kassim, N. E. Tatar,
  Non-existence of global solutions for
 a differential equation involving Hilfer fractional derivative,
  Electron. J. Differential Equations (235)(2013)1--10.
 
 \bibitem{Sousa1}Sousa J. Vanterler da C., Oliveira E. Capelas de.,
  On the  $\psi$--Hilfer fractional derivative.
  Commun.Nonlinear Sci. Numer. Simulat.,60(2018),72–-91.
 
 \bibitem{Sousa} Sousa J. Vanterler da C., Oliveira E. Capelas de.,
  A Gronwall inequality and the Cauchy-type problem by means of $\psi$--Hilfer operator, 
  arXiv:1709.03634,(2017).
 
 \bibitem{Ulam} S. M. Ulam, 
 A Collection of the Mathematical Problems, 
 Interscience Publishers, New York, (1960).
 
 \bibitem{Wang} J. Wang, L. Lv,  Y. Zhou, 
 Ulam stability and data dependence for frational differential equations with Caputo derivative,
 Electronic Journal of Qualitative Theory of Differential Equations, 63 (2011), 1-10.
 
 \bibitem{Abbas}  S. Abbas, M. Benchohra, A. Petru {\c{s}}el,
 Ulam stability for Hilfer type fractional differential inclusions via the weakly Picard operators theory,
  Fractional Calculus and Applied Analysis, 20 (2017), 384--398.
  
 \bibitem{Abbas1} S. Abbas, M. Benchohra, J. E. Lagreg, A. Alsaedi, Y. Zhou,
  Existence and Ulam stability for fractional differential equations of Hilfer-Hadamard type, Advances in Difference Equations 2017(1)2017:180/ DOI 10.1186/s13662-017-1231-1.
   
\bibitem{Sousa2} Sousa, J. Vanterler da C., E. Capelas de Oliveira,
On the Ulam–-Hyers–-Rassias stability for nonlinear fractional differential equations using the $\psi $--Hilfer operator,
J. Fixed Point Theory Appl. (2018) 20:96
https://doi.org/10.1007/s11784-018-0587-5
 
\bibitem{Benchohra} M. Benchohra, J. E. Lazreg,
 Existence and Ulam stability for nonlinear implicit fractional differential equations with Hadamard derivative, Studia Universitatis Babes-Bolyai, Mathematica, 62(1)(2017), 27-38.
 
 \bibitem{Huang} J. Huang, Y. Li,
  Hyers-–Ulam stability of delay differential equations of first order, 
 Math. Nachr. 289(1)(2016), 60-–66.
 
 \bibitem{Kucche} K. D. Kucche, S. T. Sutar, 
 On existence and stability results for non-linear fractional delay differential equations,
 Bol. Soc. Paran. Mat., 36(2018), 55--75.
 
  \bibitem{Samko} S. G. Samko, A. A. Kilbas, O. I. Marichev,
  Fractional integrals and derivatives, Theory and applications,
   Gordon and Breach, Amsterdam, 1983.
   
   \bibitem{Siddiqi} A. H. Siddiqi, 
   Functional analysis with applications,
   Tata McGraw-Hill Publishing Ltd, New Delhi, (1986).
 
 \bibitem{Rus} I. A. Rus,
 Ulam stability of ordinary differential equations,
 {\small Studia.``BABES-BOLYAI", Mathematica,} 54(4)(2009), 125--133. 
 
\end{thebibliography}
\end{document}